\newtheorem{theorem}{Theorem}[section]
\newtheorem{lemma}[theorem]{Lemma}
\newtheorem{corollary}[theorem]{Corollary}
\theoremstyle{definition}
\newtheorem{definition}[theorem]{Definition}
\newtheorem{remark}[theorem]{Remark}
\let \restr = \upharpoonright
\let \bs = \backslash
\let \into = \longrightarrow
\let \sub = \subseteq
\let \ov = \overline
\let \a = \alpha
\let \b = \beta
\let \d = \delta
\let \k = \kappa
\let \D = \Delta
\let \x = \xi
\let \z= \zeta
\let \o = \omega
\let \la = \langle
\let \ra = \rangle
\let \mtcl = \mathcal
\let \it = \item
\title{Generalized symmetric systems and thin--very tall compact scattered spaces}
\author[M.A. Mota]{Miguel Angel Mota}
\address{Miguel Angel Mota,  Departamento de Matem\'aticas,
Instituto Tecnol\'ogico Aut\'onomo de M\'exico,
Mexico City,
MEXICO
01080}
\email{motagaytan@gmail.com}
\author[W. Weiss]{William Weiss}
\address{William Weiss,  Department of Mathematics,
University of Toronto,
Toronto, Ontario,
CANADA
M5S 2E4}
\email{weiss@math.toronto.edu}
\date{}
\begin{document}

\subjclass[2010]{03E35, 03G05, 54A25, 54A35, 54G12}

\maketitle
\pagestyle{myheadings}\markright{Generalized symmetric systems and thin--very tall sBa}

\begin{abstract}
We solve a well--known problem in the theory of compact scattered spaces and superatomic boolean algebras by showing that, under $\textsc{GCH}$ and for each regular cardinal $\kappa \geq \omega$, there is a poset $\mtcl P_\k$ preserving all cardinals and forcing the existence of a $\kappa$--thin very tall locally compact scattered space. For $\kappa > \o$, we conceive the poset $\mtcl P_\k$ as a higher analogue of the poset $\mtcl P_\o$ originally introduced by Asper\'{o} and Bagaria in the context of an (unpublished) alternative consistency proof.

\end{abstract}

\section{Introduction}

The famous Cantor Bendixson derivative process on a topological space $X$ proceeds by first letting $I_{0}(X)$ be the set of all isolated points of $X$. At each larger ordinal $\beta$, $I_{\beta}(X)$  denotes the set of all isolated points of the subspace $X \setminus \bigcup \{I_{\alpha}(X) : \alpha < \beta \}$. The least ordinal $\lambda$ such that $I_{\lambda}(X) = \emptyset$ is called the height of the space $X$ and the sequence of cardinals $\{ |I_{\beta}(X)| : \beta < \lambda \}$ is said to be the cardinal sequence of $X$.

A natural problem is to determine which sequences of cardinal numbers can be the cardinal sequence of a topological space $X$. The most extensively studied class of such spaces $X$ are the compact Hausdorff scattered spaces; a space $X$ is said to be scattered provided that
$X = \bigcup \{I_{\beta}(X): \beta < \lambda \}$ where $\lambda$ is the height of $X$; that is, its Cantor-Bendixson derivative is the empty set. These spaces are, through the Stone Duality, exactly the Stone spaces of superatomic Boolean algebras and are also related to cardinal arithmetic through the pcf theory of S. Shelah. We defer to \cite{BAG} for a more complete background of the general problem.

The reader will readily observe that the height $\lambda$ of a compact Hausdorff scattered space must be a successor ordinal $\lambda = \alpha + 1$ and that $I_{\alpha}$ must be finite. Hence the problem reduces to determining whether or not a sequence of cardinal numbers is the cardinal sequence of a locally compact Hausdorff scattered space (called an LCS space in the recent literature).

At this time, despite much intensive study, it is still not completely understood which constant sequences of cardinals can be the the cardinal sequences of an LCS space. If the constant sequence with value $\kappa$ and length $\lambda$ is the cardinal sequence of an LCS space $X$ then we say that the space $X$ has width $\kappa$ as well as height $\lambda$. Initially, I. Juh\'asz and the second author (\cite{JUH}) constructed an LCS with width $\omega$ and height $\omega_{1}$, which was dubbed a ``thin--tall" space; under the Continuum Hypothesis (CH) the height cannot be $\omega_{2}$ so the result is, in a sense, the best possible. However, J. Baumgartner and S. Shelah (\cite{BAUM}) used forcing to show that it is consistent (with the negation of the CH) that there is an LCS space of width $\omega$ and height $\omega_{2}$ -- a ``thin--very tall" space.

Generalizing these results from width $\omega$ to width $\kappa$ has proven to be anything but routine. For instance, it is not known whether there is a thin--tall LCS space of width $\omega_{1}$ and height $\omega_{2}$; however such a space is consistent with CH (\cite{KOE}) and also with the negation of CH (\cite{BAGLOCALLY}). Moreover, the consistency of the existence of a thin--very tall LCS space of width $\omega_{1}$ and height $\omega_{3}$ is a well-known problem, stated as Problem 5.3 in \cite{BAG}.

We now give an affirmative answer to Problem 5.3. In fact, we here prove that for each regular cardinal $\kappa$ it is consistent that there is a thin-very tall LCS space of width $\kappa$ and height
$\kappa^{++}$.

Our result also generalizes that of Baumgartner and Shelah ($\kappa = \omega$). However, we borrow from them an important idea.

\begin{definition}\label{admissible}
Let $\unlhd$ be a reflexive, transitive and antisymmetric relation on $\kappa\times \kappa^{++}$. The poset $\langle \kappa \times \kappa^{++}, \unlhd \rangle$ is said to be \emph{$(\kappa,\kappa^{++})$--admissible} iff the following hold:

\begin{itemize}

\it[$(A)$] $(\alpha,\beta) \lhd (\alpha', \beta')$ (i.e., $(\alpha,\beta) \unlhd (\alpha', \beta')$ together with  $(\alpha,\beta) \neq (\alpha', \beta')$) implies $\beta \in \beta'$,
\it[$(B)$] for every $\beta \in  \kappa^{++}$ and every $(\alpha', \beta') \in \kappa \times \kappa^{++}$ with $\beta \in \beta'$, there are infinitely many $\alpha$ such that $(\alpha,\beta) \unlhd (\alpha', \beta')$, and
\it[$(C)$] for all $(\alpha_0,\beta_0)$, $(\alpha_1,\beta_1)$ in $\kappa \times \kappa^{++}$, there is a finite subset $b \subseteq \kappa \times \kappa^{++}$ (called a \emph{barrier for $(\alpha_0,\beta_0)$ and $(\alpha_1,\beta_1)$}) such that

\begin{itemize}
\it[$(C.1)$] all the elements of $b$ are $\unlhd$--below both, $(\alpha_0,\beta_0)$ and $(\alpha_1,\beta_1)$, and

\it[$(C.2)$] every element of $\kappa \times \kappa^{++}$ which is $\unlhd$--below both, $(\alpha_0,\beta_0)$ and $(\alpha_1,\beta_1)$, is $\unlhd$--below some element of $b$.
\end{itemize}

\end{itemize}

\end{definition}

The LCS space $X$ is constructed from the admissible poset. Let $X = \kappa \times \kappa^{++}$. We declare all sets of the form
$$C(\alpha , \beta) = \{ ( \gamma , \delta ) \in \kappa \times \kappa^{++} : ( \gamma , \delta )  \unlhd (\alpha ,
\beta )\}$$

to be clopen, i.e. both open and closed. This generates a base $\mtcl{B}$ for our topology on $X$.

In order to guarantee that $X$ is an LCS space, it suffices to verify the following three claims.
\begin{itemize}
\it $X$ is a Hausdorff space.
\it Each $C(\alpha , \beta)$ is a compact subset of $X$.
\it $I_{\beta}(X) = \kappa \times \{ \beta \}$ for each $\beta < \kappa^{++}$.
\end{itemize}

In order to verify the first claim, let $( \alpha , \beta )$ and $( \alpha' , \beta' )$ be two distinct points. If $( \alpha , \beta ) \unlhd ( \alpha' , \beta' )$ then
$C(\alpha , \beta )$ is a clopen set containing $( \alpha , \beta )$ and not containing $( \alpha' , \beta' )$. Otherwise, $ X \setminus C( \alpha' , \beta' )$ is such a set.

We verify the second claim by induction on $\beta$. Let $\mtcl{C}$ be an open cover of $X$ consisting of clopen sets from our base $\mtcl{B}$. Suppose that

$$ ( \alpha , \beta ) \in B = \bigcap \{ C(\gamma_{i} , \delta_{i}) : i < m \}
\setminus \bigcup \{ C( \eta_{j} , \zeta_{j}) : j < n \}.$$
For each $j \leq n$ obtain a barrier $b_{j}$ for $( \alpha , \beta )$ and
$( \eta_{j} , \zeta_{j} )$. Then

$$ C( \alpha , \beta ) \setminus B \subseteq
\bigcup \{ C(\sigma , \tau) : (\sigma , \tau ) \in \bigcup \{ b_{j} : j \leq n \} \}$$
by property $(C.2)$. By property $(C.1)$ each

$( \sigma , \tau ) \unlhd ( \alpha , \beta )$ and so by property $(A)$ we have
$\tau \in \beta$ which allows us to invoke the inductive hypothesis for each of the finitely many
$C(\sigma , \tau)$.

The third claim is verified by a straightforward induction on $\beta$ using property $(A)$ and property $(B)$.

Our main theorem is the following.

\begin{theorem}\label{mainthm} ($\textsc{GCH}$) For every regular cardinal $\kappa$, there exists a poset $\mtcl P_\k$ such that:
\begin{itemize}
\it[$(a)$] $\mtcl P_\k$ is $<\kappa$--closed and therefore, $\mtcl P_\k$ preserves all cardinals $\leq \kappa$;
\it[$(b)$] $\mtcl P_\k$ is proper with respect to structures of cardinality $\kappa$ and therefore, $\mtcl P_\k$ preserves $\kappa^+$;
\it[$(c)$] $\mtcl P_\k$ has the $\kappa^{++}$--chain condition and therefore, $\mtcl P_\k$ preserves all cardinals $\geq \kappa^{++}$; and
\it[$(d)$] $\mtcl P_\k$ forces the existence of a $(\kappa,\kappa^{++})$--admissible poset.
\end{itemize}
\end{theorem}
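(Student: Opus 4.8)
The plan is to realize $\mathcal P_\k$ as a forcing of $<\kappa$--sized approximations to an admissible order, equipped with side conditions in the style of Baumgartner--Shelah and Asper\'o--Bagaria but with countable models replaced by models of size $\kappa$ --- the ``generalized symmetric systems'' of the title. A condition will be a pair $p=(\unlhd_p,\mathcal N_p)$: here $\unlhd_p$ is a partial order on a set $D_p\in[\kappa\times\kappa^{++}]^{<\kappa}$ satisfying the \emph{local} versions of $(A)$ and $(C)$ from Definition~\ref{admissible} (every strict $\unlhd_p$--step drops the second coordinate, and any two points of $D_p$ admit a barrier contained in $D_p$); and $\mathcal N_p$ is a family of size $<\kappa$ of elementary submodels of a fixed $(H_\theta,\in,\lhd,\mathcal P_\k,\dots)$, each of size $\kappa$, each containing $\kappa$ as a subset and closed under sequences of length $<\kappa$, satisfying the coherence axioms of a symmetric system: for members of equal height $\delta_N:=N\cap\kappa^+$ there is an isomorphism fixing the intersection and respecting the system; the heights occurring below $\delta_M$ are reflected into each member $M$; and the system is closed under intersections of members of comparable height. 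In addition $\mathcal N_p$ must \emph{respect} $(D_p,\unlhd_p)$: each $D_p\cap N$ is closed under the coherence isomorphisms, and any point of $D_p$ that is $\unlhd_p$--below a point of $D_p\cap N$ already lies in $N$. Extension $q\le p$ will mean $D_q\supseteq D_p$, $\unlhd_q\restr D_p=\unlhd_p$, $\mathcal N_q\supseteq\mathcal N_p$, barriers from $p$ remain barriers in $q$, and any new point of $D_q$ that is $\unlhd_q$--below a point of $D_p\cap N$ (some $N\in\mathcal N_p$) lies in $N$. Choosing the precise local clauses and the ``respect'' requirements so that all of $(a)$--$(d)$ hold simultaneously is the real design problem.

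Granting the definition, items $(a)$ and $(d)$ are comparatively soft. For $(a)$: given a $\le$--decreasing sequence of length $\mu<\kappa$, take coordinatewise unions; by regularity of $\kappa$ the working part stays of size $<\kappa$, and since all defining clauses are of finite character they survive increasing unions, so the union is a condition and a lower bound; thus $\mathcal P_\k$ is $<\kappa$--closed and preserves cardinals $\le\kappa$. For $(d)$: in $V[G]$ set $\unlhd:=\bigcup_{p\in G}\unlhd_p$ on $\kappa\times\kappa^{++}$. Reflexivity and totality of the domain follow from density of ``$x\in D_p$''; transitivity and antisymmetry from directedness of $G$ together with $\unlhd_q\restr D_p=\unlhd_p$; $(A)$ is inherited from each $\unlhd_p$; $(C)$ holds since for any $x,y$ it is dense to have $x,y\in D_p$, whereupon the $p$--barrier remains a barrier for $\unlhd$ by $(A)$ and barrier preservation; and $(B)$ holds because, for each $(\alpha',\beta')$, each $\beta\in\beta'$ and each $n<\omega$, it is dense that at least $n$ many $\alpha$ satisfy $(\alpha,\beta)\unlhd_p(\alpha',\beta')$ --- given $p$ with $(\alpha',\beta')\in D_p$, adjoin $n$ fresh points $(\alpha_i,\beta)$ with $(\alpha_i,\beta)\unlhd(\alpha',\beta')$ and no proper $\unlhd$--predecessors (chosen inside every member of $\mathcal N_p$ containing $(\alpha',\beta')$), and check that $(A)$ is immediate and all barrier requirements still hold (old barriers continue to work, and barriers involving a new point are trivial). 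Hence $\langle\kappa\times\kappa^{++},\unlhd\rangle$ is $(\kappa,\kappa^{++})$--admissible.

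Item $(c)$ will be a careful $\Delta$--system argument. Under $\textsc{GCH}$, $\kappa$ regular gives $2^{<\kappa}=\kappa$ and $(\kappa^+)^{<\kappa}=\kappa^+$, so the $\Delta$--system lemma applies to $\{D_{p_i}\}_{i<\kappa^{++}}$ and to suitable codes of the $\{\mathcal N_{p_i}\}$; after thinning to a set of size $\kappa^{++}$ one may assume all $D_{p_i}$ share a root $R$ with a common $\unlhd\restr R$, that the non-root parts are mutually ``indiscernible'' enough that their interaction outside $R$ is controlled, and that the side conditions pairwise cohere, with matching isomorphism types of members and correctly reflected heights. For $i\ne j$ the amalgam --- $D_{p_i}\cup D_{p_j}$, the transitive closure of $\unlhd_{p_i}\cup\unlhd_{p_j}$, and $\mathcal N_{p_i}\cup\mathcal N_{p_j}$ --- is again a condition: $(A)$ and barrier preservation follow from root agreement and the indiscernibility arrangement, and $\mathcal N_{p_i}\cup\mathcal N_{p_j}$ is a generalized symmetric system by the matching of types. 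So $p_i$ and $p_j$ are compatible, $\mathcal P_\k$ has the $\kappa^{++}$--chain condition, and cardinals $\ge\kappa^{++}$ are preserved.

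The heart of the argument is $(b)$. Fix a large $\theta$ and an appropriate expansion, and let $N^*\prec(H_\theta,\in,\dots)$ with $|N^*|=\kappa$, $\kappa\subseteq N^*$, $N^*$ closed under sequences of length $<\kappa$, $\mathcal P_\k\in N^*$, and $p\in N^*\cap\mathcal P_\k$. Set $q:=(\unlhd_p,\mathcal N_p\cup\{N^*\})$. First, $q$ is a condition: every $N\in\mathcal N_p$ is an element of $N^*$, hence has $\delta_N<\delta_{N^*}$, so the coherence axioms for $\mathcal N_p\cup\{N^*\}$ concerning equal heights, reflection into $N^*$, and intersections with $N^*$ are all witnessed trivially or by members of $\mathcal N_p$ already lying inside $N^*$. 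Next, $q$ is $(N^*,\mathcal P_\k)$--generic. Let $D\in N^*$ be dense open and $q'\le q$; we may take $q'\in D$, so $N^*\in\mathcal N_{q'}$. Form the restriction $q'\restr N^*:=(\unlhd_{q'}\restr(D_{q'}\cap N^*),\ \mathcal N_{q'}\cap N^*)$; using the closure of $N^*$ and the ``respect'' clause (which forces every barrier for points of $D_{q'}\cap N^*$ to lie inside $N^*$), this is a condition, it belongs to $N^*$, and it extends $p$. By elementarity pick $r\in D\cap N^*$ with $r\le q'\restr N^*$. The crux is that $r$ and $q'$ are compatible, with a common extension built from $D_{q'}\cup D_r$, the transitive closure of $\unlhd_{q'}\cup\unlhd_r$, and $\mathcal N_{q'}\cup\mathcal N_r$: every member of $\mathcal N_r$ has height below $\delta_{N^*}$ and is an element of $N^*\in\mathcal N_{q'}$, hence is already reflected by $q'$, and the coherence isomorphisms transport the finitely many potential clashes between the working parts (and side conditions) of $r$ and $q'$ down into $N^*$, where they are resolved because $r$ extends $q'\restr N^*$. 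Since $r\in D\cap N^*$, this shows $D\cap N^*$ is predense below $q$. Together with $<\kappa$--closedness, this yields that $\mathcal P_\k$ is proper for structures of size $\kappa$ and preserves $\kappa^+$. The main obstacle is exactly this last amalgamation: the notion of ``generalized symmetric system'' --- the correct symmetry among size--$\kappa$ models, and the correct interaction with the working part --- has to be engineered so that the reflection $r$ of $q'$ through $N^*$ can always be glued back to $q'$ while the poset remains both $<\kappa$--closed and $\kappa^{++}$--cc; lifting the symmetric--systems--of--models technology from the countable case (the $\kappa=\omega$ instance of Asper\'o--Bagaria and Baumgartner--Shelah) to uncountable $\kappa$ is the technical core.
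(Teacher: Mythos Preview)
Your outline captures the architecture correctly, but the amalgamation step---both in $(c)$ and especially in $(b)$---glosses over exactly the difficulty the paper isolates as its technical core.

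First, your conditions omit an explicit barrier function. The paper carries $b_q$ as part of each condition (together with a marked subfamily $\Omega_q\subseteq\Delta_q$ and the requirement that $b_q(\{u,v\})\in N$ whenever $N\in\Omega_q$ and $u,v\in N$). This is not incidental: the authors remark that for $\kappa=\omega$ the approximations are finite and barriers take care of themselves, but for uncountable $\kappa$ ``it is not clear how to amalgamate two infinite partial orders in such a way that it is still possible to define a barrier function.'' Your proposal to take ``the transitive closure of $\unlhd_{q'}\cup\unlhd_r$'' (and likewise in the chain condition) does not address this. For a mixed pair $x\in D_r\setminus D_{q'}$, $y\in D_{q'}\setminus D_r$, you give no argument that the set of common $\unlhd$--lower bounds is captured by a finite barrier, nor that the barriers from $r$ and $q'$ remain barriers in the amalgam. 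The paper's solution is to define a \emph{progressive isomorphism} $\Psi_{1,2}$ between the two working parts and to build a specific $\Psi_{1,2}$--amalgamation of the orders (Lemma~\ref{amalgorder}) together with an explicitly constructed barrier function $b_3$ (Lemma~\ref{amalgcond}) and a variant $B_3$ (Corollary~\ref{amalgcond2}); the added relations are not just the transitive closure but include $x\unlhd_3 y$ whenever $\Psi_{1,2}(x)\unlhd_2 y$, precisely so that barriers can be written down.

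Second, in the properness argument you say only that ``coherence isomorphisms transport the finitely many potential clashes \dots down into $N^*$.'' The paper needs more. Before reflecting $q_2$ through $N^*$, it applies Lemma~\ref{fff} to choose, for each height $f_2(\tau)\notin N$, an interval $[\alpha_{f_2(\tau)},\beta_{f_2(\tau)})$ in $N$ that avoids $N'\cap N$ for every $N'\in\Delta_2$ of smaller height; the reflected condition $q_1$ is then chosen (by elementarity) so that its heights land in these intervals. This placement is exactly what guarantees that for $M\in\Omega_2$ with $\delta_M<\delta_N$, any point of the amalgam in $M$ already lies in $dom(\unlhd_2)$, so condition~$(5)$ can be verified. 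Your ``respect'' clause (full downward closure of each $N$ in $\unlhd_p$) is stronger than the paper's condition~$(5)$ and is not obviously preserved when you glue $r$ back to $q'$: a point of $D_r\subseteq N^*$ placed $\unlhd$--below some $z\in D_{q'}\cap M$, for $M\in\mathcal N_{q'}$ with $\delta_M\geq\delta_{N^*}$ but $M\neq N^*$, need not lie in $M$. The paper sidesteps this by requiring only that \emph{barriers} land in $M$, and by using the explicit form of $B_3$ together with the interval placement from Lemma~\ref{fff} and an elementarity argument (the remark inside the proof of Lemma~\ref{proper}) to show they do.

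In short, the missing ingredient is the progressive--isomorphism amalgamation machinery (Definition~\ref{progressive}, Lemmas~\ref{amalgorder}, \ref{amalgcond}, Corollary~\ref{amalgcond2}) and the interval--selection Lemma~\ref{fff}; ``transitive closure'' and an unspecified ``respect'' condition do not suffice.
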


\section{Definitions}

From now on, we will say that a set is of \emph{small} size if its cardinality is below $\kappa$. Using this terminology, we say that a forcing poset $\mtcl P$ is \emph{proper with respect to structures of cardinality $\kappa$} iff for every sufficiently large regular cardinal $\lambda$, for every elementary substructure $M \prec H(\lambda)$ of size $\kappa$ containing $P$ and closed under small sequences and for every condition $p \in \mtcl P \cap M$, there is an extension $q \leq_{\mtcl P} p$ such that $q$ is $(M, \mtcl P)$-generic. Note that a forcing poset $\mtcl P$ is proper in the traditional sense iff $\mtcl P$ is proper with respect to $\aleph_0$.

A typical condition in the forcing notion $\mtcl P_\k$ can be described as a small approximation to the $(\kappa,\kappa^{++})$--admissible poset together with a a ``small symmetric system of structures of size $\kappa$'' as side conditions. This notion of symmetry is a generalization of Definition 2.1 in \cite{ASP}.\footnote{Asper\'{o} has also considered the same generalization in \cite{DAV}.} Before stating it, let us introduce some \emph{ad hoc} notation. If $N$ is a set whose intersection with $\kappa^+$ is an ordinal, then $\delta_N$ will denote this intersection. Throughout this paper, if $N$ and $N'$ are such that there is a (unique) isomorphism from $(N, \in)$ into $(N', \in)$, then we denote this isomorphism by $\Psi_{N, N'}$.

\begin{definition}\label{symm}
Let $T \subseteq H(\kappa^{++})$, let $\mu$ be a cardinal and let $\{N_i\,:\,i< \mu\}$ be a small set (so, $\mu < \kappa)$) consisting of subsets of $H(\k^{++})$, each of them of size $\kappa$. We will say that \emph{$\{N_i\,:\,i<\mu\}$ is a $(T,\kappa)$--symmetric system} if

\begin{itemize}

\it[$(A)$] For every $i<\mu$, the set $N_i$ is closed under small sequences and $(N_i, \in, T)$  is an elementary substructure of $(H(\k^{++}), \in, T)$.

\it[$(B)$] Given distinct $i$, $i'$ in $\mu$, if $\d_{N_i}=\d_{N_{i'}}$, then there is a (unique) isomorphism $$\Psi_{N_i, N_{i'}}:(N_i, \in, T)\into (N_{i'}, \in, T)$$

\noindent Furthermore, we ask that $\Psi_{N_i, N_{i'}}$ be the identity on $N_i\cap N_{i'}$.

\it[$(C)$] For all $i$, $j$ in $\mu$, if $\d_{N_j}<\d_{N_i}$, then there is some $i'<\mu$ such that $\d_{N_{i'}}=\d_{N_i}$ and $N_j\in N_{i'}$.

\it[$(D)$] For all $i$, $i'$, $j$ in $\mu$, if $N_j\in N_i$ and $\d_{N_i}=\d_{N_{i'}}$, then there is some $j'<\mu$ such that $\Psi_{N_i, N_{i'}}(N_j)=N_{j'}$.

\end{itemize}

\end{definition}

In (A) in the above definition, and elsewhere, $(N, \in, T)$ denotes the structure  $(N, \in, T\cap N)$. In clause (B) note that, since $\kappa \subseteq N_i\cap N_{i'}$, $\Psi_{N_i, N_{i'}}$ is continuous, in the sense that $sup(\Psi_{N_i, N_{i}}``\,\x)= \Psi_{N_i, N_{i'}}(\x)$ whenever $\x\in N_i$ is a limit ordinal of cofinality at most $\kappa$.

The main facts on symmetric systems that we will be using are the following two amalgamation lemmas. Their proofs are straightforward  verifications and they are word by word the proofs of Lemmas 2.3 and 2.4  in \cite{ASP}, so we will not give them here.

\begin{lemma}\label{iso2}
Let $\mtcl N$ be a $(T,\kappa)$--symmetric system, and let $N\in\mtcl N$. Then the following holds.

\begin{itemize}

\it[(i)] $\mtcl N \cap N$ is also a  $(T,\kappa)$--symmetric system.

\it[(ii)] If $\mtcl W \subseteq  N$ is a $(T,\kappa)$--symmetric system and $\mtcl N \cap N  \subseteq \mtcl W$, then $$\mtcl V:= \mtcl N \cup\{\Psi_{N, N'}(W) \,:\, W \in \mtcl W,\, N' \in \mtcl N,\,\d_{N'}=\d_N\}$$ is a $(T,\kappa)$--symmetric system.
\end{itemize}
\end{lemma}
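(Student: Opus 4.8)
\emph{Strategy.} In every case the plan is to check the four clauses (A)--(D) of Definition~\ref{symm} directly, leaning on three standing facts that recur throughout. First, if a set $M$ is closed under small sequences then every small subset of $M$ is a member of $M$; in particular $\mtcl W\in N$ whenever $\mtcl W\sub N$ is small. Second, if $M\prec(H(\k^{++}),\in,T)$ with $\k\sub M$ and $x\in M$ has size $\k$, then $x\sub M$ (apply, inside $M$, a surjection $\k\to x$). Third, the canonical isomorphism $\Psi_{M,M'}$ attached to members $M,M'$ of a symmetric system with $\d_M=\d_{M'}$ is elementary for $(H(\k^{++}),\in,T)$--formulas, so it transports the predicate ``is a $(T,\k)$--symmetric system'' and the parameters $\d_\bullet$, ``has size $\k$'', ``is closed under small sequences''; moreover $\Psi_{M,M'}$ fixes every ordinal $<\d_M$ pointwise (as $\d_M=\d_{M'}\sub M\cap M'$), fixes $M\cap M'$ pointwise, and these isomorphisms compose canonically, $\Psi_{M',M''}\circ\Psi_{M,M'}=\Psi_{M,M''}$.

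\emph{Part (i).} Clauses (A) and (B) pass from $\mtcl N$ to $\mtcl N\cap N$ verbatim, since elementarity, closure, and the isomorphisms witnessing (B) make no reference to the ambient system; also $|\mtcl N\cap N|\leq|\mtcl N|<\k$. For (C), let $N_j,N_i\in\mtcl N\cap N$ with $\d_{N_j}<\d_{N_i}$. Apply (C) for $\mtcl N$ to get $N''\in\mtcl N$ with $\d_{N''}=\d_{N_i}$ and $N_j\in N''$. If $N''\in N$ we are done. Otherwise $\d_{N''}=\d_{N_i}<\d_N$ (membership of $N_i$ in $N$ forces its characteristic ordinal below $\d_N$), so (C) for $\mtcl N$ applied to $(N'',N)$ yields $N^\#\in\mtcl N$ with $\d_{N^\#}=\d_N$ and $N''\in N^\#$; by the second standing fact $N''\sub N^\#$, hence $N_j\in N^\#$ and $N^\#\ne N$. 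Now (B) supplies $\Psi_{N^\#,N}$, which fixes $N^\#\cap N$ pointwise, and (D) for $\mtcl N$ gives $N_{i'}\in\mtcl N$ with $\Psi_{N^\#,N}(N'')=N_{i'}$; then $N_{i'}\in N$ (it is in the range of $\Psi_{N^\#,N}$), $\d_{N_{i'}}=\d_{N_i}$ (since $\d_{N''}=\d_{N_i}\in N^\#\cap N$ is fixed), and $N_j=\Psi_{N^\#,N}(N_j)\in\Psi_{N^\#,N}(N'')=N_{i'}$ because $\Psi_{N^\#,N}$ is an $\in$--isomorphism; so $N_{i'}\in\mtcl N\cap N$ works. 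Clause (D) for $\mtcl N\cap N$ is easier: if $N_j\in N_i$ and $\d_{N_i}=\d_{N_{i'}}$ with all three in $\mtcl N\cap N$, then the (D)--witness $N_{j'}$ for $\mtcl N$ satisfies $N_{j'}=\Psi_{N_i,N_{i'}}(N_j)\in N_{i'}\sub N$ by the second standing fact, so $N_{j'}\in\mtcl N\cap N$.

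\emph{Part (ii).} Write $W^{N'}:=\Psi_{N,N'}(W)$ for $W\in\mtcl W$ and $N'\in\mtcl N$ with $\d_{N'}=\d_N$, so $W^N=W$ and $\mtcl W\sub\mtcl V$. Because $\mtcl W\sub N$ and each such $W$ has $\d_W<\d_N$, the standing facts give $\d_{W^{N'}}=\d_W<\d_N$ and $W^{N'}\in N'$, each image $\Psi_{N,N'}``\mtcl W$ is again a $(T,\k)$--symmetric system, and $\Psi_{N,N'}``(\mtcl N\cap N)=\mtcl N\cap N'$ (the last equality by (B) and (D) for $\mtcl N$ applied in both directions). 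I would first dispatch the cheap requirements: $\mtcl V$ is small, and each member satisfies clause (A) --- this is clause (A) of $\mtcl N$ for members of $\mtcl N$, and for the images $W^{N'}$ it follows from the third standing fact together with $W\in N$. I would then verify (B), (C), (D) by a case analysis on whether each structure involved lies in $\mtcl N$ or has the form $W^{N'}$. The ``pure'' cases reduce directly to the corresponding clause for $\mtcl N$, or for $\mtcl W$ transported by some $\Psi_{N,N'}$. The substantive cases pair a structure $M\in\mtcl N$ with a structure at a level realized inside $\mtcl W$: if $M\notin N$, replace it by $\hat N\in\mtcl N$ with $\d_{\hat N}=\d_N$ and $M\in\hat N$ (from (C) for $\mtcl N$); then $M\in\mtcl N\cap\hat N=\Psi_{N,\hat N}``(\mtcl N\cap N)$, so $M=\Psi_{N,\hat N}(M_0)$ with $M_0\in\mtcl N\cap N\sub\mtcl W$ and $\d_{M_0}=\d_M$; invoke the appropriate clause for $\mtcl W$ with $M_0$ in place of $M$, and transport the resulting witness into $\mtcl V$ through $\Psi_{N,\hat N}$, using $\Psi_{N,\hat N}``\mtcl W\sub\mtcl V$ and the composition law. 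The opposite direction, locating some $W^{N'}$ inside a structure at a level coming from $\mtcl N$, is handled by $N'$ itself when that level is $\d_N$ (as $W^{N'}\in N'$), and similarly, via (C) for $\mtcl N$ or a pull-down into $\mtcl W$, when it is not.

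\emph{Main obstacle.} The only genuine work is the case analysis in part (ii), and within it the hard cases are the ``mixed'' instances of (C) and (D), where a characteristic ordinal produced by $\mtcl N$ must be reconciled with one produced by $\mtcl W$. The hypothesis $\mtcl N\cap N\sub\mtcl W$ is exactly what makes this possible: it lets any member of $\mtcl N$ living below level $\d_N$ be pulled, through a suitable $\Psi_{N,\hat N}$, down to a member of $\mtcl W$, after which the symmetry of $\mtcl W$ does the rest; part~(i) is the natural companion, recording that intersections of $\mtcl N$ with its own members are themselves symmetric systems. No step needs any idea beyond the three standing facts, which is why the verification is, as the authors note, word for word that of Lemmas~2.3 and~2.4 of \cite{ASP}.
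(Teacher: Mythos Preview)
The paper does not actually prove this lemma: it states that the proofs ``are straightforward verifications and they are word by word the proofs of Lemmas~2.3 and~2.4 in \cite{ASP}, so we will not give them here.'' Your proposal is a correct and reasonably detailed sketch of precisely that verification---checking clauses (A)--(D) directly, using elementarity, closure under small sequences, and the composition and fixing properties of the isomorphisms $\Psi_{M,M'}$---and you yourself note at the end that it is word for word the argument of \cite{ASP}. So there is nothing to compare: your approach is the intended one, just spelled out rather than deferred.
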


\begin{lemma}\label{iso3}

Let $T\sub H(\k^{++})$ and let $\mtcl M=\{M_i\,:\,i< \mu\}$ and $\mtcl N=\{N_i\,:\,i<\mu\}$ be $(T,\kappa)$--symmetric systems.
Suppose that $(\bigcup\mtcl M)\cap(\bigcup\mtcl N)=X$ and that there is an isomorphism $\Psi$ between the  structures
$\la \bigcup_{i<\mu}M_i,\in, T, X, M_i\ra_{i<\mu}$ and $\la \bigcup_{i< \mu}N_i,\in,  T, X, N_{i}\ra_{i<\mu}$ fixing $X$. Then  $\mtcl M\cup\mtcl N$ is a $(T,\kappa)$--symmetric system.
\end{lemma}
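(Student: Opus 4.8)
The plan is to mirror the proof of Lemma 2.4 in \cite{ASP}, verifying that each of the four clauses $(A)$–$(D)$ of Definition \ref{symm} holds for $\mtcl M \cup \mtcl N$. The setup is symmetric in $\mtcl M$ and $\mtcl N$ via the isomorphism $\Psi$, so in several places it suffices to check one ``side'' and then apply $\Psi$ (or $\Psi^{-1}$). Before starting, I would record the basic consequences of the hypothesis: since $\kappa \subseteq X$ and $\Psi$ fixes $X$ pointwise, $\Psi$ fixes $\kappa$ and hence $\kappa^+\cap(\bigcup\mtcl M) \subseteq X$ forces $\delta_{M_i} = \delta_{N_i}$ for every $i<\mu$ (more precisely, $\Psi$ being an $\in$-isomorphism carrying $M_i$ to $N_i$ and fixing the ordinals in $X$ gives $\delta_{M_i}=\delta_{N_i}$). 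I would also note that for $M\in\mtcl M$ and $N\in\mtcl N$ with $\delta_M=\delta_N$, the transitive collapses agree, so the isomorphism $\Psi_{M,N}$ exists; and if additionally $M\cap N\subseteq X$ then $\Psi_{M,N}$ is the restriction of $\Psi$ to $M$, which is the key point making the symmetry clauses go through.

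The verification proceeds clause by clause. Clause $(A)$ is immediate: each $M_i$ and each $N_i$ individually satisfies it, so every member of $\mtcl M\cup\mtcl N$ does. For clause $(B)$, take $P, P'\in\mtcl M\cup\mtcl N$ with $\delta_P=\delta_{P'}$; if both are in $\mtcl M$ or both in $\mtcl N$ there is nothing new, so suppose $P=M_i\in\mtcl M$ and $P'=N_j\in\mtcl N$. Then $M_i$, $N_i$, $N_j$ all have the same $\delta$; composing $\Psi\restr M_i : M_i\to N_i$ with $\Psi_{N_i,N_j}$ (which exists by clause $(B)$ for $\mtcl N$) gives the required isomorphism $\Psi_{M_i,N_j}$, and one checks it is the identity on $M_i\cap N_j$: an element $z\in M_i\cap N_j$ lies in $M_i\cap(\bigcup\mtcl N)\subseteq X$, so $\Psi(z)=z$, and then $\Psi_{N_i,N_j}$ fixes it by clause $(B)$ for $\mtcl N$ since it lies in $N_i\cap N_j$. (Here I use that $z\in M_i\cap(\bigcup\mtcl N)$ implies $z\in X$ and $z\in N_i$, because $\Psi(z)=z\in N_i$ and $\Psi$ maps $M_i$ onto $N_i$.)

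For clause $(C)$, suppose $P, Q\in\mtcl M\cup\mtcl N$ with $\delta_Q<\delta_P$; the only genuinely new case is, say, $Q=N_j\in\mtcl N$ and $P=M_i\in\mtcl M$ (the reverse case is symmetric). Since $\delta_{N_j}<\delta_{M_i}=\delta_{N_i}$, clause $(C)$ for $\mtcl N$ yields $N_{i'}\in\mtcl N$ with $\delta_{N_{i'}}=\delta_{N_i}$ and $N_j\in N_{i'}$; now $N_{i'}$, $M_i$, $M_{i'}$ all share the same $\delta$, and $\Psi_{M_{i'},N_{i'}}=\Psi\restr M_{i'}$ because $M_{i'}\cap N_{i'}\subseteq X$, so $N_j=\Psi_{M_{i'},N_{i'}}(M_{j'})$ for the appropriate $M_{j'}\in\mtcl M$ (using clause $(D)$ for $\mtcl M$ to see such a $j'$ exists), witnessing clause $(C)$. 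Clause $(D)$ is handled similarly: given $P_j\in P_i$ with $\delta_{P_i}=\delta_{P_{i'}}$ and the two structures on opposite sides, push through $\Psi$ (or $\Psi^{-1}$) together with clause $(D)$ for the homogeneous side, again using that the relevant collapsing isomorphisms restrict to $\Psi$ on the overlap contained in $X$. The main obstacle, and the only place requiring care, is bookkeeping which collapse map is being applied and repeatedly invoking the identity $M\cap(\bigcup\mtcl N)\subseteq X$ (and its mirror) to conclude that $\Psi$ and the various $\Psi_{\cdot,\cdot}$ agree where they need to; once that observation is in place, everything is a routine diagram chase, exactly as in \cite{ASP}. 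This is why the authors omit the details.
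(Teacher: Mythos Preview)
The paper does not give its own proof of this lemma; it simply states that the verification is word-for-word the proof of Lemma~2.4 in \cite{ASP}, and your proposal is precisely a sketch of that clause-by-clause verification. Aside from some minor imprecisions (e.g., the extra discussion in clause~$(C)$ is unnecessary once $N_{i'}$ is found, and the identity $\Psi\restr M_{i'}=\Psi_{M_{i'},N_{i'}}$ follows already from $\Psi[M_{i'}]=N_{i'}$ and uniqueness, not from $M_{i'}\cap N_{i'}\subseteq X$), your approach is correct and coincides with the one the paper defers to.
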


Fix a bijection $\Phi: \kappa^{++} \into H(\kappa^{++})$. The definition of $\mtcl P_\kappa$ is as follows. Conditions in  $\mtcl P_\kappa$ are sequences of the form $q=(\unlhd_q, b_q, \Delta_q, \Omega_q)$ such that:

\begin{itemize}

\it[$(1)$] $\unlhd_q$ is a small, reflexive, transitive and antisymmetric relation on $dom(\unlhd_q) \cup range(\unlhd_q) \subseteq \kappa\times \kappa^{++}$satisfying condition $(A)$ in definition \ref{admissible},
\it[$(2)$] $b_q$ is a function whose domain is equal to the set $[dom(\unlhd_q)]^2$ such that $b_q(\{(\alpha_0,\beta_0),(\alpha_1,\beta_1)\}$) is a $\unlhd_q$--barrier (in the sense of $(C)$ in definition \ref{admissible}) for $(\alpha_0,\beta_0)$ and $(\alpha_1,\beta_1)$,
\it[$(3)$] $\Delta_q$ is a $(\Phi,\kappa)$--symmetric system,
\it[$(4)$] $\Omega_q \subseteq \Delta_q$, and
\it[$(5)$] if $N$ is in $\Omega_q$ and $\{(\alpha_0,\beta_0), (\alpha_1,\beta_1)\} \subseteq N \cap dom(\unlhd_q)$, then $b_q(\{(\alpha_0,\beta_0), (\alpha_1,\beta_1)\}) \in N$.

\end{itemize}

From now on, if $q$ is a sequence with four ordered elements, then $\unlhd_q, b_q, \Delta_q$ and $\Omega_q$ will respectively denote the first, second, third and fourth element of $q$. Given conditions $q$ and $p$ in $\mtcl P_\kappa$, $q$ extends $p$  (denoted by $q \leq_\kappa p$) if and only if $\unlhd_p, b_p, \Delta_p$ and $\Omega_p$ are respectively included in $\unlhd_q, b_q, \Delta_q$ and $\Omega_q$.

In an unpublished note dated September 2009 David Asper\'{o} and Joan Bagaria introduced the forcing
$\mtcl P_\kappa$ as above for $\kappa= \omega$. We would like to thank them for sharing this specific construction with us.

However, the constraint imposed by the finiteness of the barrier function is not at all problematic in the context $\kappa= \omega$ since the approximations to the $(\omega,\omega_2)$--admissible poset coming from conditions in the forcing $\mtcl P_\omega$ are also finite. However, it is not clear how to amalgamate two infinite partial orders in such a way that it is still possible to define a barrier function. For this reason, we needed to introduce the notion of \emph{progressively isomorphic partial orders} (see Definition \ref{progressive}) as well as Lemmas \ref{amalgorder} and \ref{amalgcond}. These results are the main ingredients allowing us to prove that, for every regular cardinal $\kappa$, $\mtcl P_\kappa$ is $\kappa^{++}$--c.c. and proper with respect to structures of cardinality $\kappa$.

\section{Proving Theorem \ref{mainthm}}

The following two lemmas are immediate (for the first one it suffices to recall that $\kappa$ is a regular cardinal)

\begin{lemma}
$\mtcl P_\kappa$ is $<\kappa$--closed.
\end{lemma}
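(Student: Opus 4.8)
The plan is to take an arbitrary $\g<\k$ and a decreasing sequence $\langle q_\x:\x<\g\rangle$ of conditions, and to check that the coordinatewise union $q=\bigl(\bigcup_{\x<\g}\unlhd_{q_\x},\ \bigcup_{\x<\g}b_{q_\x},\ \bigcup_{\x<\g}\Delta_{q_\x},\ \bigcup_{\x<\g}\Omega_{q_\x}\bigr)$ is again a condition in $\mtcl P_\k$; since each coordinate of $q$ extends the corresponding coordinate of every $q_\x$, such a $q$ is automatically a lower bound. So the whole argument is a verification of clauses $(1)$--$(5)$ of the definition of $\mtcl P_\k$ for $q$, and the single place the hypothesis on $\k$ is used is to keep things small.

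For the easy clauses: $dom(\unlhd_q)\cup range(\unlhd_q)$ and $\Delta_q$ are unions of $\g<\k$ many sets of size $<\k$, so regularity of $\k$ keeps them small, and every member of $\Delta_q$ lies in some $\Delta_{q_\x}$ and hence is still of size $\k$, closed under small sequences, and elementary. Reflexivity, transitivity and antisymmetry of $\unlhd_q$, together with property $(A)$ of Definition \ref{admissible}, pass to the increasing union since each is a statement about one or two comparable pairs, which already appear together in some $\unlhd_{q_\x}$; this gives clause $(1)$. Clause $(3)$ follows because clauses $(A)$--$(D)$ of Definition \ref{symm} all have hypotheses mentioning only finitely many members of the system and conclusions asserting the existence of a further member, or of the isomorphism $\Psi_{N,N'}$ (which is unique, hence independent of the ambient system); so any instance is witnessed inside some $\Delta_{q_\x}$, and the witness survives in $\Delta_q$. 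Clause $(4)$ is trivial.

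The only clauses requiring a little care are $(2)$ and $(5)$, where the growing order $\unlhd_q$ interacts with the barriers. Since the $b_{q_\x}$ form an increasing chain of functions, $b_q$ is a function with domain $\bigcup_{\x<\g}[dom(\unlhd_{q_\x})]^2=[dom(\unlhd_q)]^2$; for a fixed pair $e\in[dom(\unlhd_q)]^2$ pick $\x$ with $e\in[dom(\unlhd_{q_\x})]^2$, so that $b_q(e)=b_{q_\x}(e)$ is a $\unlhd_{q_\x}$--barrier. Condition $(C.1)$ survives because $\unlhd$ only grows; for $(C.2)$, if some $(\g',\d')$ is $\unlhd_q$--below both points of $e$, that already holds in some $\unlhd_{q_{\x'}}$ with $\x'\ge\x$, and since $b_{q_{\x'}}(e)=b_q(e)$ is a $\unlhd_{q_{\x'}}$--barrier, $(\g',\d')$ lies $\unlhd_{q_{\x'}}$--below, hence $\unlhd_q$--below, an element of $b_q(e)$. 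Clause $(5)$ is the same pattern: given $N\in\Omega_q$ and $e\sub N\cap dom(\unlhd_q)$, choose $\x'$ with both $N\in\Omega_{q_{\x'}}$ and $e\sub dom(\unlhd_{q_{\x'}})$, and apply clause $(5)$ for $q_{\x'}$, noting $b_q(e)=b_{q_{\x'}}(e)$. I anticipate no real obstacle: the only potential trouble is that enlarging the order could spoil a barrier or condition $(5)$, and it cannot, since barriers are finite and every condition's order is small, so every relevant fact stabilizes at a stage $<\g$. The lemma is then ``immediate'' modulo this bookkeeping.
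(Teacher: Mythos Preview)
Your proposal is correct and is exactly the routine verification the paper has in mind: the authors do not spell out a proof at all, merely noting that the lemma is ``immediate (for the first one it suffices to recall that $\kappa$ is a regular cardinal)''. Your coordinatewise-union argument, using regularity of $\kappa$ for smallness and the chain condition on the $q_\xi$ to stabilize every finite-data requirement (in particular the barrier property and clause~$(5)$) at some stage, is precisely the intended unpacking.
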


\begin{lemma}
If $q$ is a condition in $\mtcl P_\kappa$, the pairs $(\alpha, \beta)$ and $(\alpha', \beta')$ are in  $\kappa \times \kappa^{++}$, $(\alpha, \beta)$ is not in the domain of $\unlhd_q$ and $\beta \in \beta'$, then there exists $q' \leq_\kappa q$ such that $(\alpha, \beta) \unlhd_{q'}(\alpha', \beta')$.
\end{lemma}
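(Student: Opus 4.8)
The plan is to leave the side conditions $\Delta_q$ and $\Omega_q$ of $q$ untouched and to enlarge only the order $\unlhd_q$ and the barrier function $b_q$, inserting $(\alpha,\beta)$ as a $\unlhd_{q'}$-minimal point sitting just below $(\alpha',\beta')$ and everything above it; with this choice the substantive clauses $(1)$, $(2)$ and $(5)$ defining $\mtcl P_\kappa$ become nearly automatic.

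First I would make the harmless reduction to the case $(\alpha',\beta')\in dom(\unlhd_q)$: if $(\alpha',\beta')$ is not already in the field $dom(\unlhd_q)\cup range(\unlhd_q)$ of $\unlhd_q$, replace $q$ by the condition obtained by adjoining to $\unlhd_q$ only the reflexive pair $((\alpha',\beta'),(\alpha',\beta'))$ and declaring $\emptyset$ a barrier for each new pair; since a $\unlhd$-isolated point has no common lower bound with any other point, $\emptyset$ is a legitimate barrier, and $(1)$, $(2)$, $(5)$ are preserved. (Note $(\alpha,\beta)\neq(\alpha',\beta')$ because $\beta\in\beta'$, and that $(\alpha,\beta)\notin dom(\unlhd_q)$ forces $(\alpha,\beta)$ out of the whole field, $\unlhd_q$ being reflexive there.) Then, writing $Z$ for the field of $\unlhd_q$, I would define $\unlhd_{q'}$ on $Z\cup\{(\alpha,\beta)\}$ by retaining $\unlhd_q$ on $Z$ and setting $(\alpha,\beta)\unlhd_{q'}y$ exactly when $y=(\alpha,\beta)$ or $(\alpha',\beta')\unlhd_q y$. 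Reflexivity and antisymmetry are immediate (only $(\alpha,\beta)$ is $\unlhd_{q'}$-below $(\alpha,\beta)$), transitivity reduces to that of $\unlhd_q$, and property $(A)$ for the new strict pairs $(\alpha,\beta)\lhd_{q'}y$ follows from $\beta\in\beta'$ together with $\beta'$ belonging to the second coordinate of $y$ whenever $y\neq(\alpha',\beta')$ (this last by property $(A)$ for $\unlhd_q$ and transitivity of $\in$ on ordinals). Smallness holds since $\kappa$ is regular. This is clause $(1)$.

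Next I would extend the barrier function: keep $b_{q'}\restr[Z]^2:=b_q$, and for a new pair put $b_{q'}(\{(\alpha,\beta),y\})$ equal to $\{(\alpha,\beta)\}$ if $(\alpha,\beta)\unlhd_{q'}y$ and to $\emptyset$ otherwise. These are barriers because $(\alpha,\beta)$ is the only conceivable common $\unlhd_{q'}$-lower bound of $(\alpha,\beta)$ and $y$. The one thing that genuinely needs checking — and the only step I expect to require care — is that the \emph{old} barriers remain barriers in the enlarged order: $b_q(\{z_0,z_1\})$ could fail now only if $(\alpha,\beta)$ has become a new common lower bound of $z_0,z_1$, i.e. if $(\alpha',\beta')\unlhd_q z_0$ and $(\alpha',\beta')\unlhd_q z_1$; but then $(\alpha',\beta')$ was already a common $\unlhd_q$-lower bound of $z_0,z_1$, so $(C.2)$ for $b_q$ yields $w\in b_q(\{z_0,z_1\})$ with $(\alpha',\beta')\unlhd_q w$, hence $(\alpha,\beta)\unlhd_{q'}w$, and $(C.2)$ survives (while $(C.1)$ is trivially preserved). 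This gives clause $(2)$.

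Finally, with $\Delta_{q'}:=\Delta_q$ and $\Omega_{q'}:=\Omega_q$, clauses $(3)$ and $(4)$ are inherited from $q$, and clause $(5)$ holds: if $N\in\Omega_q$ and $\{x,y\}\subseteq N\cap dom(\unlhd_{q'})$, then either $x,y\in Z$, so the barrier is $b_q(\{x,y\})\in N$ by clause $(5)$ for $q$, or $(\alpha,\beta)\in\{x,y\}$, so $(\alpha,\beta)\in N$ and the barrier is $\emptyset$ or $\{(\alpha,\beta)\}$, both of which lie in $N$. Hence $q':=(\unlhd_{q'},b_{q'},\Delta_q,\Omega_q)$ is a condition, $q'\leq_\kappa q$ by construction, and $(\alpha,\beta)\unlhd_{q'}(\alpha',\beta')$, as required.
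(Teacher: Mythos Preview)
Your proof is correct. The paper gives no argument for this lemma at all, declaring it ``immediate'' alongside the $<\kappa$--closure of $\mtcl P_\kappa$; your write-up simply spells out the straightforward verification that the paper leaves to the reader, and the particular choice you make---inserting $(\alpha,\beta)$ as a new minimal point below the $\unlhd_q$--upset of $(\alpha',\beta')$, with barriers $\{(\alpha,\beta)\}$ or $\emptyset$ for the new pairs---is exactly the natural one and handles clauses $(1)$, $(2)$, and $(5)$ cleanly.
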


\begin{lemma}\label{amalgorder}
Let $\langle P_1, \unlhd_1 \rangle$ and $\langle P_2, \unlhd_2 \rangle$ be posets and let $\Psi:\langle P_1, \unlhd_1 \rangle \into \langle P_2, \unlhd_2 \rangle$ be an isomorphism fixing $P_1 \cap P_2$. Then there is a (unique) partial order $\unlhd_3$ on $P_1 \cup P_2$ (called the \emph{$\Psi_{1,2}$--amalgamation of $\unlhd_1$ and $\unlhd_2$}) satisfying:
\begin{itemize}
\it[(i)] for all $x$, $y \in P_1$, $x \unlhd_3 y$ iff $x \unlhd_1 y$,
\it[(ii)] for all $x$, $y \in P_2$, $x \unlhd_3 y$ iff $x \unlhd_2 y$,
\it[(iii)] for all $x \in P_1 \setminus P_2$ and $y \in P_2 \setminus P_1$, $x \unlhd_3 y$ iff $\Psi(x) \unlhd_2 y$, and
\it[(iv)] for all $x \in P_2 \setminus P_1$ and $y \in P_1 \setminus P_2$, $x \unlhd_3 y$ iff $x \unlhd_2 w \unlhd_1 y$ for some $w \in P_1 \cap P_2$.
\end{itemize}

\end{lemma}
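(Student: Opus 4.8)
The plan is to \emph{define} $\unlhd_3$ by clauses (i)--(iv) themselves, read as a recipe that decides $x\unlhd_3 y$ according to which of $P_1\cap P_2$, $P_1\setminus P_2$, $P_2\setminus P_1$ contains $x$ and which contains $y$, and then to check that this relation is a partial order and is the only one satisfying (i)--(iv). Uniqueness and well-definedness come together: a short case split shows every ordered pair from $P_1\cup P_2$ is governed by one of the four clauses, the sole overlap being pairs with $x,y\in P_1\cap P_2$, where (i) and (ii) agree because $\Psi$ is an order isomorphism fixing $P_1\cap P_2$ pointwise. Reflexivity of $\unlhd_3$ is then immediate from (i) and (ii).

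For antisymmetry and transitivity I would introduce the map $\pi\colon P_1\cup P_2\to P_2$ with $\pi\restr P_1=\Psi$ and $\pi\restr P_2=\mathrm{id}_{P_2}$; this is well defined precisely because $\Psi$ fixes $P_1\cap P_2$. A four-case inspection of (i)--(iv) --- each case using only that $\Psi$ is an order isomorphism, and in the clause-(iv) case using $\Psi(w)=w$ for $w\in P_1\cap P_2$ together with transitivity of $\unlhd_2$ --- shows that $\pi$ is order preserving from $\unlhd_3$ to $\unlhd_2$. Antisymmetry follows: from $x\unlhd_3 y\unlhd_3 x$ we get $\pi(x)=\pi(y)$; since $\pi$ is injective on $P_1$, on $P_2$ and on $P_1\cap P_2$, the only way to have $x\ne y$ is, say, $x\in P_1\setminus P_2$ and $y\in P_2\setminus P_1$ with $\Psi(x)=y$, and then $y\unlhd_3 x$ must come from (iv), giving $w\in P_1\cap P_2$ with $y\unlhd_2 w\unlhd_1 x$; but $w\unlhd_1\Psi^{-1}(y)$ forces $w\unlhd_2 y$, so $w=y$, contradicting $y\notin P_1$.

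Transitivity is the main obstacle, and I would handle it as follows. First record the partial converse of order preservation: if $\pi(x)\unlhd_2\pi(y)$ and we are \emph{not} in the single configuration $x\in P_2\setminus P_1$, $y\in P_1\setminus P_2$, then $x\unlhd_3 y$ (again a case check made routine by $\Psi$ being an isomorphism fixing the intersection). Now given $x\unlhd_3 y\unlhd_3 z$, order preservation yields $\pi(x)\unlhd_2\pi(z)$; if $(x,z)$ avoids the exceptional configuration, the converse gives $x\unlhd_3 z$ and we are done. Otherwise $x\in P_2\setminus P_1$ and $z\in P_1\setminus P_2$, and I produce the witness required by (iv) by splitting on the type of the middle point $y$: if $y\in P_1\cap P_2$ then (ii) and (i) give $x\unlhd_2 y\unlhd_1 z$, so $w=y$ works; if $y\in P_1\setminus P_2$ then (iv) gives $w\in P_1\cap P_2$ with $x\unlhd_2 w\unlhd_1 y$ and (i) gives $y\unlhd_1 z$, so the same $w$ works; if $y\in P_2\setminus P_1$ then (ii) gives $x\unlhd_2 y$ and (iv) gives $w\in P_1\cap P_2$ with $y\unlhd_2 w\unlhd_1 z$, and $x\unlhd_2 y\unlhd_2 w$ finishes it. In each case $x\unlhd_3 z$ holds through clause (iv). The only genuinely delicate point in the whole argument is this trichotomy on $y$ within the exceptional configuration; everything else is bookkeeping that $\Psi$ being an isomorphism fixing $P_1\cap P_2$ renders automatic.
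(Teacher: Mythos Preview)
Your proof is correct, and it reaches the same conclusion as the paper by a genuinely different route. The paper proceeds by brute force: antisymmetry is handled by the single nontrivial configuration directly, and transitivity is verified through a nine-way case split on the location of the first two points $x,y$ among $P_1\cap P_2$, $P_1\setminus P_2$, $P_2\setminus P_1$, with further subcases on $z$ inside each. Your key device is the map $\pi\colon P_1\cup P_2\to P_2$ sending $P_1$ via $\Psi$ and fixing $P_2$; once you observe that $\pi$ is $\unlhd_3$-to-$\unlhd_2$ order preserving and that the converse holds in all but the single configuration $x\in P_2\setminus P_1$, $y\in P_1\setminus P_2$, both antisymmetry and almost all of transitivity are pushed down to $\unlhd_2$. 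Only the exceptional configuration for $(x,z)$ survives, and there your three-way split on the middle point $y$ produces the clause-(iv) witness in each subcase. The payoff of your approach is a much shorter case analysis and a clearer explanation of \emph{why} the amalgamation works (it is an order-extension along a retraction onto $\langle P_2,\unlhd_2\rangle$); the paper's approach, while longer, has the virtue of being entirely self-contained and mechanical, with no auxiliary map to introduce or verify.
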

\begin{proof}
For $x$, $y \in P_1 \cup P_2$, let $x \unlhd_3 y$ provided at least one of the above four conditions holds. Note that the first two conditions are compatible since $\Psi$ fixes $P_1 \cap P_2$. Let us verify that this order is a partial order on $P_1 \cup P_2$. Trivially, $\unlhd_3$ is reflexive on $P_1 \cup P_2$. In order to verify antisymmetry, it suffices to show that there can be no
$x \in P_1 \setminus P_2$ and $y \in P_2 \setminus P_1$ with $x \unlhd_3 y$ and $y \unlhd_3 x$. If there were such $x$ and $y$, then  we would have $\Psi(x) \unlhd_2 y$ and $y \unlhd_2 w \unlhd_1 x$ for some $w \in P_1 \cap P_2$. So, $\Psi(x) \unlhd_2 y \unlhd_2 w =\Psi(w) \unlhd_2 \Psi(x) \unlhd_2 y$. Using the antisymmetry of $\unlhd_2$ we would conclude that $y =w$ contradicting the assumption that $y \notin P_1$. We finally check the transitivity of $\unlhd_3$. So, assume that $x \unlhd_3 y$ and $y \unlhd_3 z$; we will see that $x \unlhd_3 z$. The proof breaks into nine cases, depending upon where $x$ and $y$ lie.

Case 1: $x$, $y \in P_1 \cap P_2$. This first case is straightforward using the transitivity of $\unlhd_1$ (or the transitivity of  $\unlhd_2$) depending whether $z \in P_1$ (or $z \in P_2$).

Case 2: $x \in P_1 \cap P_2$ and $y \in P_1 \setminus P_2 $ (and hence, $x \unlhd_1 y$). We omit the obvious subcase when $z \in P_1$. If $z \in P_2 \setminus P_1$, then $x = \Psi(x) \unlhd_2 \Psi(y) \unlhd_2 z$. So, $x \unlhd_2 z$ and $x \unlhd_3 z$.

Case 3: $x \in P_1 \cap P_2$ and $y \in P_2 \setminus P_1$  (and hence, $x \unlhd_2 y$). We omit the obvious subcase when $z \in P_2$. If $z \in P_1 \setminus P_2$, then $x \unlhd_2 y \unlhd_2 w \unlhd_1 z$ for some $w \in P_1 \cap P_2$. So, $x \unlhd_2 w$, $x \unlhd_1 w \unlhd_1 z$ and $x \unlhd_3 z$.

Case 4: $x \in P_1 \setminus P_2$ and $y  \in P_1 \cap P_2$ (and hence, $x \unlhd_1 y$). We omit the obvious subcase when $z \in P_1$. If $z \in P_2 \setminus P_1$, then $\Psi(x) \unlhd_2 \Psi(y)= y \unlhd_2 z$. So, $x \unlhd_3 z$.

Case 5: $x \in P_1 \setminus P_2$ and $y  \in P_1 \setminus P_2$ (and hence, $x \unlhd_1 y$). We omit the obvious subcase when $z \in P_1$. If $z \in P_2 \setminus P_1$, then $\Psi(x) \unlhd_2 \Psi(y) \unlhd_2 z$. So, $x \unlhd_3 z$.

Case 6: $x \in P_1 \setminus P_2$ and $y  \in P_2 \setminus P_1$ (and hence, $\Psi(x)  \unlhd_2 y$). If $z \in P_1 \cap P_2$, then $\Psi(x) \unlhd_2 y \unlhd_2 z = \Psi(z)$. So, $x \unlhd_1 z$ and $x \unlhd_3 z$. If $z \in P_1 \setminus P_2$, then $y \unlhd_2 w \unlhd_1 z$ for some $w \in P_1 \cap P_2$. In this subcase we get $\Psi(x) \unlhd_2y \unlhd_2 w= \Psi(w)$. So, $x \unlhd_1 w \unlhd_1 z$ which implies that $x \unlhd_1 z$ and $x \unlhd_3 z$. Finally, if $z \in P_2 \setminus P_1$, then $\Psi(x) \unlhd_2 y \unlhd_2 z$. So, $\Psi(x) \unlhd_2 z$ and $x \unlhd_3 z$.

Case 7:  $x \in P_2 \setminus P_1$ and $y \in P_1 \cap P_2$ (and hence, $x \unlhd_2 y$). We omit the obvious subcase when $z \in P_2$. If $z \in P_1 \setminus P_2$, then $x \unlhd_2 y \unlhd_1 z$. So, $x \unlhd_3 z$.

Case 8: $x \in P_2 \setminus P_1$ and $y \in P_1 \setminus P_2$ (and hence, $x \unlhd_2 w \unlhd_1 y$ for some $w \in P_1 \cap P_2$). If $z \in P_1$, then $w \unlhd_1 z$. So, if $z \in P_1 \setminus P_2$, then $x \unlhd_2 w \unlhd_1 z$ and $x \unlhd_3 z$. If $z \in P_1 \cap P_2$, then $x \unlhd_2 w = \Psi(w) \unlhd_2 \Psi(z)=z$ and $x \unlhd_3 z$. Finally, if $z \in P_2 \setminus P_1$, then $x \unlhd_2 w = \Psi(w) \unlhd_2 \Psi(y)  \unlhd_2 z$. So, $x \unlhd_2 z$ and $x \unlhd_3 z$.

Case 9: $x \in P_2 \setminus P_1$ and $y \in P_2 \setminus P_1$ (and hence, $x \unlhd_2 y$). We omit the obvious subcase when $z \in P_2$. If $z \in P_1 \setminus P_2$, then $x \unlhd_2 y \unlhd_2 w  \unlhd_1 z $ for some $w \in P_1 \cap P_2$. Obviously, $x \unlhd_3 z$ which completes the proof of the lemma.

\end{proof}

\begin{definition}\label{progressive}
 Let $p_1=(\unlhd_1, b_1, \emptyset, \emptyset)$ and $p_2=(\unlhd_2, b_2, \emptyset, \emptyset)$ be $\mtcl P_\kappa$--conditions and  let $\Psi_{1,2}: \langle dom(\unlhd_1), \unlhd_1, b_1\rangle \into \langle dom(\unlhd_2), \unlhd_2, b_2 \rangle $ be an isomorphism.  $\Psi_{1,2}$ is said to be a \emph{progressive isomorphism} iff the following three conditions hold:
\begin{itemize}

\it[$(\oplus)$] if $(\a,\b)$ is in $dom(\unlhd_1)$, then there is $\beta' \geq \beta$ such that $\Psi_{1,2}((\a,\b))= (\a,\b')$,

\it[$(\ominus)$] if $(\a,\b)$ is in $dom(\unlhd_1)$ and $\beta$ is in the range of $dom(\unlhd_2)$, then $\Psi_{1,2}((\a,\b))= (\a,\b)$, and

\it[$(\otimes)$] if $u$ and $v$ are both in $dom(\unlhd_1)\cap dom(\unlhd_2)$, then $b_1(\{u,v\})= b_2(\{u,v\})$.

\end{itemize}
\end{definition}

The following result is obvious.

\begin{remark}
Let $p_1=(\unlhd_1, b_1, \emptyset, \emptyset)$ and $p_2=(\unlhd_2, b_2, \emptyset, \emptyset)$ be $\mtcl P_\kappa$--conditions. If $\Psi_{1,2}$ is a progressive isomorphism from  $\langle dom(\unlhd_1), \unlhd_1, b_1\rangle$ to $\langle dom(\unlhd_2), \unlhd_2, b_2 \rangle $, then $\Psi_{1,2}$ fixes $dom(\unlhd_1) \cap dom(\unlhd_2)$ and $(\unlhd_3, \emptyset, \emptyset, \emptyset)$ is also in $\mtcl P_\kappa$, where $\unlhd_3$ is the $\Psi_{1,2}$--amalgamation of $\unlhd_1$ and $\unlhd_2$ as in Lemma \ref{amalgorder}.
\end{remark}

\begin{lemma}\label{amalgcond}
Let $p_1=(\unlhd_1, b_1, \emptyset, \emptyset)$ and $p_2=(\unlhd_2, b_2, \emptyset, \emptyset)$ be $\mtcl P_\kappa$--conditions, let $\Psi_{1,2}: \langle dom(\unlhd_1), \unlhd_1, b_1\rangle \into \langle dom(\unlhd_2), \unlhd_2, b_2 \rangle $ be a progressive isomorphism and let $\unlhd_3$ be the $\Psi_{1,2}$--amalgamation of $\unlhd_1$ and $\unlhd_2$. Using recursion on the ordinals define the (unique) function $b_3: [dom(\unlhd_1) \cup dom(\unlhd_2)]^2 \into [dom(\unlhd_1) \cup dom(\unlhd_2)]^{\omega}$ satisfying the following properties:

\begin{itemize}

\it[$(i)$] $b_3$ extends both,  $b_1$ and $b_2$,

\it[$(ii)$] if $x \in dom(\unlhd_1)\setminus dom(\unlhd_2)$, then $b_3(\{x, \Psi_{1,2}(x)\})=\{x\}$, and

\it[$(iii)$] if $x \in dom(\unlhd_1)\setminus dom(\unlhd_2)$, $y \in dom(\unlhd_2)\setminus dom(\unlhd_1)$ and $y \neq \Psi_{1,2}(x)$, then $b_3(\{x, y\})=\bigcup \{b_3(\{x, v\})\,:\,v \in b_2(\{\Psi_{1,2}(x), y\})\} \cup \bigcup \{b_3(\{u, y\})\,:\,u \in b_1(\{x, \Psi_{1,2}^{-1}(y)\})\}$.

\end{itemize}
Then $p_3:=(\unlhd_3, b_3, \emptyset, \emptyset)$ is a condition in $\mtcl P_\kappa$, extending both $p_1$ and $p_2$. This condition will be called the $\Psi_{1,2}$--amalgamation of $p_1$ and $p_2$.
\end{lemma}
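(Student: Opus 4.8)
The plan is to verify that $p_3 = (\unlhd_3, b_3, \emptyset, \emptyset)$ satisfies clauses $(1)$–$(5)$ in the definition of $\mtcl P_\kappa$; since $\Delta_{p_3} = \Omega_{p_3} = \emptyset$, clauses $(3)$, $(4)$, $(5)$ are vacuous, and clause $(1)$ is exactly the content of the Remark preceding the lemma (that $(\unlhd_3, \emptyset, \emptyset, \emptyset)$ is a condition), noting that $\unlhd_3$ is small because $\unlhd_1$ and $\unlhd_2$ are and that condition $(A)$ of Definition \ref{admissible} for $\unlhd_3$ follows from the same property for $\unlhd_1$, $\unlhd_2$ together with $(\oplus)$ and $(\ominus)$ of progressiveness. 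So the real work is clause $(2)$: that the recursively defined $b_3$ is total on $[dom(\unlhd_1)\cup dom(\unlhd_2)]^2$, lands in finite subsets, and that for each pair $\{u,v\}$ the set $b_3(\{u,v\})$ is a genuine $\unlhd_3$--barrier for $u$ and $v$ (properties $(C.1)$ and $(C.2)$). Finally, $(i)$ gives $b_1, b_2 \subseteq b_3$, hence $p_3 \leq_\kappa p_1$ and $p_3 \leq_\kappa p_2$.

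First I would check that the recursion in $(i)$–$(iii)$ is well-founded and well-defined. The pairs $\{u,v\}$ with $u,v$ both in $dom(\unlhd_1)$ or both in $dom(\unlhd_2)$ are handled outright by $(i)$; pairs of the form $\{x,\Psi_{1,2}(x)\}$ with $x\in dom(\unlhd_1)\setminus dom(\unlhd_2)$ by $(ii)$; the remaining ``mixed'' pairs $\{x,y\}$ with $x\in dom(\unlhd_1)\setminus dom(\unlhd_2)$, $y\in dom(\unlhd_2)\setminus dom(\unlhd_1)$, $y\neq\Psi_{1,2}(x)$ by $(iii)$. The point is that in clause $(iii)$, by property $(\oplus)$ we have $\Psi_{1,2}(x)=(\alpha,\beta'')$ where $x=(\alpha,\beta)$ and $\beta''\geq\beta$; the elements $v\in b_2(\{\Psi_{1,2}(x),y\})$ are $\unlhd_2$--below $y$ and below $\Psi_{1,2}(x)=(\alpha,\beta'')$, hence by $(A)$ have second coordinate strictly below $\beta''$, so the pairs $\{x,v\}$ are ``smaller'' in a suitable well-founded rank (measured, say, by the second coordinate of the $P_2$--side element, together with whether the pair is mixed); similarly the elements $u\in b_1(\{x,\Psi_{1,2}^{-1}(y)\})$ are $\unlhd_1$--below $x$, and one checks that $\Psi_{1,2}^{-1}(y)$ has the same second coordinate as $y$ by $(\ominus)$ (since $y$, being in the range of $\unlhd_2$ and in the domain via the mixed pair, has its ordinal in $range(dom(\unlhd_2))$, so $\Psi_{1,2}$ fixes it — or one reduces to this by a rank argument on $y$). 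The recursion terminates because $b_1, b_2$ return finite sets, so each step reduces to finitely many strictly smaller instances; this also shows each $b_3(\{u,v\})$ is finite, being a finite union of finite sets.

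The heart of the argument, and the step I expect to be the main obstacle, is the barrier verification for the mixed pairs by induction along the same rank. For $(C.1)$: the elements of $b_3(\{x,v\})$ must be shown $\unlhd_3$--below both $x$ and $v$, and $v\unlhd_2 y$ combined with clause $(iv)$ of Lemma \ref{amalgorder} (if $v\in P_1\cap P_2$, use $v\unlhd_1\ldots$; the relevant transitivity cases of $\unlhd_3$ from the proof of Lemma \ref{amalgorder}) yields $\unlhd_3$--below $y$; symmetrically for the $u$-terms, using $u\unlhd_1 x$ and $x\unlhd_3 y$ via $\Psi_{1,2}(x)\unlhd_2 y$ (clause $(iii)$ of Lemma \ref{amalgorder}). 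For $(C.2)$: suppose $w\unlhd_3 x$ and $w\unlhd_3 y$; I would split on whether $w\in P_1$, $w\in P_2\setminus P_1$, or $w\in P_1\cap P_2$, and in each case push $w$ through $\Psi_{1,2}$ or $\Psi_{1,2}^{-1}$ and along a $w'\in P_1\cap P_2$ witnessing clause $(iv)$, to land it $\unlhd_2$--below $\Psi_{1,2}(x)$ and $\unlhd_2$--below $y$ (hence below some $v\in b_2(\{\Psi_{1,2}(x),y\})$ by the barrier property of $b_2$), or $\unlhd_1$--below $x$ and below $\Psi_{1,2}^{-1}(y)$ (hence below some $u\in b_1(\{x,\Psi_{1,2}^{-1}(y)\})$), and then invoke the inductive hypothesis that $b_3(\{x,v\})$ resp. $b_3(\{u,y\})$ is already a barrier, so $w$ is $\unlhd_3$--below an element of it, which lies in $b_3(\{x,y\})$ by definition $(iii)$. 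The bookkeeping in matching $\unlhd_3$-facts to the defining clauses of Lemma \ref{amalgorder} and keeping the rank strictly decreasing is where care is needed; once $(C.1)$, $(C.2)$ hold for all pairs, clause $(5)$ is vacuous and $p_3\in\mtcl P_\kappa$ with $p_3\leq_\kappa p_1,p_2$ follows immediately.
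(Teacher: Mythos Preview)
Your overall plan matches the paper's: reduce to clause $(2)$, argue the recursion terminates in finite sets, and verify $(C.1)$ and $(C.2)$ by induction. The mixed-pair analysis you sketch for $(C.2)$ is essentially the paper's Case~4.

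However, there is a genuine gap. You treat the pairs $\{x,y\}$ with $x,y$ both in $dom(\unlhd_1)$ (or both in $dom(\unlhd_2)$) as ``handled outright by $(i)$'', and you focus the barrier verification exclusively on the mixed pairs. But $(i)$ only tells you that $b_3(\{x,y\}) = b_1(\{x,y\})$ is the \emph{value}; it does not say that this set is a $\unlhd_3$-barrier. The issue is $(C.2)$: a witness $t$ below both $x$ and $y$ may live in $dom(\unlhd_2) \setminus dom(\unlhd_1)$, and then $t \unlhd_3 x$, $t \unlhd_3 y$ only give (via clause $(iv)$ of Lemma~\ref{amalgorder}) intermediaries $u,v \in dom(\unlhd_1)\cap dom(\unlhd_2)$ with $t \unlhd_2 u \unlhd_1 x$ and $t \unlhd_2 v \unlhd_1 y$. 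To push $t$ below a single element of $b_1(\{x,y\})$ one must first find a \emph{common} $w \in dom(\unlhd_1)\cap dom(\unlhd_2)$ with $t \unlhd_2 w$ and $w \unlhd_1 x,y$; this is precisely where property $(\otimes)$ of a progressive isomorphism is used (so that $b_2(\{u,v\}) = b_1(\{u,v\}) \subseteq dom(\unlhd_1)\cap dom(\unlhd_2)$, and any $w$ in this common barrier does the job). You never invoke $(\otimes)$, and without it the argument for these ``base'' cases does not go through.

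Note moreover that your induction for the mixed case explicitly appeals to the barrier property for the sub-pairs $\{x,v\}$ and $\{u,y\}$; when $v \in dom(\unlhd_1)\cap dom(\unlhd_2)$ (resp.\ $u \in dom(\unlhd_1)\cap dom(\unlhd_2)$) these are non-mixed pairs, so your induction cannot even get started without first settling the analogue of the paper's Cases~1 and~2. A symmetric but separate argument is needed for $x,y \in dom(\unlhd_2)$ with $t \in dom(\unlhd_1)\setminus dom(\unlhd_2)$ (the paper's Case~2), where one pushes $t$ through $\Psi_{1,2}$ rather than through an intermediary in the intersection.
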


\begin{proof}
By the above remark, it is enough to check that $p_3$ satisfies condition $(2)$ of the definition of the elements of $\mtcl P_\kappa$. For so doing, note first that an inductive argument (together with the fact the range of $b_i$ ($i \in \{1,2\}$ is included in $[dom(\unlhd_i)]^{< \omega}$) allows to show that the range of $b_3$ is certainly included in $[dom(\unlhd_1) \cup dom(\unlhd_2)]^{\omega}$. We check now that every element of $b_3(\{x, y\})$ is $\unlhd_3$ below both, $x$ and $y$ which is obviously true when either there is $i$ in $\{1,2\}$ such that $x$ and $y$ are both in $dom(\unlhd_i)$, or  $x \in dom(\unlhd_1) \setminus dom(\unlhd_2)$ and $y=\Psi_{1,2}(x)$ (in this last case $x \unlhd_3 y= \Psi_{1,2}(x)$ since $\Psi_{1,2}(x) \leq_2 \Psi_{1,2}(x)$). The remaining case when $x \in dom(\unlhd_1)\setminus dom(\unlhd_2)$ and $y \in dom(\unlhd_2)\setminus dom(\unlhd_1)$ can be easily proved by induction using the transitivity of $\unlhd_3$. Now we check that every element $t$ of $dom(\unlhd_1) \cup dom(\unlhd_2)$ which is $\unlhd_3$--below  both, $x$ and $y$, is $\unlhd_3$--below  some element of $b_3(\{x,y\})$. We will inductively verify four possible cases.

Case 1: $x \in dom(\unlhd_1)$ and $y \in dom(\unlhd_1)$. We omit the obvious subcase when $t \in dom(\unlhd_1)$. If $t \in dom(\unlhd_2) \setminus dom(\unlhd_1)$, then there are $u$ and $v$ in $dom(\unlhd_1) \cap dom(\unlhd_2)$ with $t \unlhd_2 u \unlhd_1 x$ and $t \unlhd_2 v \unlhd_1 y$. If $u=v$, we let $w=u=v$. If $u \neq v$, we find $w \in b_2(\{u,v\})$ such that $t \unlhd_2 w$. Given that $b_1(\{u,v\})=b_2(\{u,v\})$, $w$ is in $dom(\unlhd_1) \cap dom(\unlhd_2)$. So, it is always possible to find $z \in b_1(\{x,y\})$ with $w \unlhd_1 z$. Since $t \unlhd_2 w \unlhd_1 z$, we conclude $t \unlhd_3 z$.

Case 2:  $x \in dom(\unlhd_2)$ and $y \in dom(\unlhd_2)$. We omit the obvious subcases when either $t \in dom(\unlhd_2)$, or $x, y \in dom(\unlhd_2) \cap dom(\unlhd_1)$. So, assume first that $x,y \in dom(\unlhd_2) \setminus dom(\unlhd_1)$ and $t \in dom(\unlhd_1) \setminus dom(\unlhd_2)$.  Under these assumptions, $\Psi_{1,2}(t) \unlhd_2 x, y$ and we can find $z \in b_2(\{x,y\})=b_3(\{x,y\})$ such that $\Psi_{1,2}(t)\unlhd_2 z$ and hence, $t \unlhd_3 z$. The final subcase to consider is when $x \in dom(\unlhd_2) \setminus dom(\unlhd_1)$, $y \in dom(\unlhd_2) \cap dom(\unlhd_1)$ and $t \in dom(\unlhd_1) \setminus dom(\unlhd_2)$. In this scenario, $t \unlhd_1 y= \Psi_{1,2}(y)$ and hence $\Psi_{1,2}(t) \unlhd_2 x, y$. The rest of the argument is just as before.

Case 3: $x \in dom(\unlhd_1) \setminus dom(\unlhd_2)$ and $y=\Psi_{1,2}(x)$. This case is obvious since $b_3(\{x,y\})=\{x\}$.

Case 4: $x \in dom(\unlhd_1) \setminus dom(\unlhd_2)$, $y \in dom(\unlhd_2) \setminus dom(\unlhd_1)$ and $y \neq \Psi_{1,2}(x)$. If $t \in dom(\unlhd_1) \setminus dom(\unlhd_2)$, then $t \unlhd_1 x$ and $\Psi_{1,2}(t)\unlhd_2 y$.  In particular, $t \unlhd_1 \Psi_{1,2}^{-1}(y)$ and there is $u \in  b_1(\{x, \Psi_{1,2}^{-1}(y)\})$ with $t \unlhd_1 u$. Note that $u   \in dom(\unlhd_1)$ and hence, $u \neq y$. Using either Case 2 or the inductive hypothesis (depending whether or not $u \in dom(\unlhd_2)$ we can always find $z \in b_3(\{u,y\}) \subseteq b_3(\{x,y\})$ with $t \unlhd_3 z$. If $t \in dom(\unlhd_2)$, then $t \unlhd_2 y$ and $t \unlhd_2 w \unlhd_1 x$ for some $w  \in dom(\unlhd_1) \cap dom(\unlhd_2)$ ($w=t$ when $t \in dom(\unlhd_2) \cap dom(\unlhd_1)$). In particular, $t \unlhd_2 w= \Psi_{1,2}(w) \unlhd_2 \Psi_{1,2}(x)$ and there is $v \in  b_2(\{\Psi_{1,2}(x), y\})$ with $t \unlhd_2 v$. Note that $v   \in dom(\unlhd_2)$ and hence, $v \neq x$. Using either Case 1 or the inductive hypothesis (depending whether or not $v \in dom(\unlhd_1)$ we can always find $z \in b_3(\{x,v\}) \subseteq b_3(\{x,y\})$ with $t \unlhd_3 z$. This ends the proof of the lemma.
\end{proof}

Assume again the hypothesis of Lemma \ref{amalgcond}. In the following we will define a useful variation of the barrier $b_3$ defined above. First, adopt the natural convention that $b_{1}(\{x , x \}) = \{ x \}$ whenever $x \in dom(\unlhd_1)$. So, for such an $x$, $b_3(\{x, \Psi_{1,2}(x)\})=\{x\}= b_{1}(\{x , x \})$. A straightforward induction shows that if $x \in dom(\unlhd_1)\setminus dom(\unlhd_2)$, $y \in dom(\unlhd_2)\setminus dom(\unlhd_1)$ and $y \neq \Psi_{1,2}(x)$, then there are two (canonical) finite sequences $\langle\{x_i, v_i\} \,:\, i < m \rangle$ and $\langle\{u_j, y_j\} \,:\, j < n \rangle$ with  $x_i, u_j \unlhd_1 x$ and
$v_i, y_j \unlhd_2 y$ for all $i$ and $j$ and such that

$$b_3(\{x, y\})=\bigcup \{b_1(\{x_i, v_i\})\,:\, i <m \} \cup \bigcup \{b_2(\{u_j, y_j\})\,:\, j < n \}.$$

In particular, $v_i$ and $u_j$ are in $dom(\unlhd_1)\cap dom(\unlhd_2)$ for all $i$ and $j$. Now, note that if $j <n$, then each element of $b_2(\{u_j, y_j\})$ is $\unlhd_2$--below both, $u_j$ and $y$ and therefore, $\unlhd_2$--below some element of $b_2(\{u_j, y\})$. So, if we define $B_3(\{x, y\})$ as

$$B_3(\{x, y\})=\bigcup \{b_1(\{x_i, v_i\})\,:\, i <m \} \cup \bigcup \{b_2(\{u_j, y\})\,:\, j < n \}$$

\noindent
then the conclusion is that every element $t$ of $dom(\unlhd_1) \cup dom(\unlhd_2)$ which is $\unlhd_3$--below both, $x$ and $y$, is $\unlhd_3$--below some element of $B_3(\{x,y\})$ (and of course every element of $B_3(\{x, y\}$ is $\unlhd_3$--below both, $x$ and $y$). More generally, we have proved the following result.

\begin{corollary}\label{amalgcond2}
Let $p_1=(\unlhd_1, b_1, \emptyset, \emptyset)$ and $p_2=(\unlhd_2, b_2, \emptyset, \emptyset)$ be $\mtcl P_\kappa$--conditions, let $\Psi_{1,2}: \langle dom(\unlhd_1), \unlhd_1, b_1\rangle \into \langle dom(\unlhd_2), \unlhd_2, b_2 \rangle $ be a progressive isomorphism and let $\unlhd_3$ be the $\Psi_{1,2}$--amalgamation of $\unlhd_1$ and $\unlhd_2$. Using recursion on the ordinals define the (unique) function $B_3: [dom(\unlhd_1) \cup dom(\unlhd_2)]^2 \into [dom(\unlhd_1) \cup dom(\unlhd_2)]^{\omega}$ satisfying the following properties:

\begin{itemize}

\it[$(i)$] $B_3$ extends both,  $b_1$ and $b_2$,

\it[$(ii)$] if $x \in dom(\unlhd_1)\setminus dom(\unlhd_2)$, then $B_3(\{x, \Psi_{1,2}(x)\})=\{x\}$, and

\it[$(iii)$] if $x \in dom(\unlhd_1)\setminus dom(\unlhd_2)$, $y \in dom(\unlhd_2)\setminus dom(\unlhd_1)$ and $y \neq \Psi_{1,2}(x)$, then $B_3(\{x,y\})$ is defined as in the previous discussion.

\end{itemize}
Then $p_3:=(\unlhd_3, B_3, \emptyset, \emptyset)$ is a condition in $\mtcl P_\kappa$, extending both $p_1$ and $p_2$. This condition will be called the $\Psi_{1,2}$--Amalgamation of $p_1$ and $p_2$. Note the upper case A.
\end{corollary}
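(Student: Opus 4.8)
The plan is to reduce Corollary \ref{amalgcond2} almost entirely to Lemma \ref{amalgcond}, exploiting the fact that the discussion preceding the statement has already done the substantive work. The only genuinely new content of the corollary relative to the lemma is: (1) the well-definedness of the recursion defining $B_3$ (i.e.\ that it terminates and lands in $[dom(\unlhd_1)\cup dom(\unlhd_2)]^{\omega}$), and (2) the verification that $B_3$ still witnesses condition $(2)$ in the definition of $\mtcl P_\kappa$-conditions, namely that $B_3(\{x,y\})$ is a $\unlhd_3$-barrier for $x$ and $y$. By the remark following Definition \ref{progressive}, $(\unlhd_3,\emptyset,\emptyset,\emptyset)$ is already a condition, so exactly as in the proof of Lemma \ref{amalgcond} it suffices to check clause $(2)$ for $B_3$.

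First I would address well-definedness. The recursion for $B_3$ proceeds along the same ordinal rank as the recursion for $b_3$ in Lemma \ref{amalgcond}: in clause $(iii)$, the sets $b_1(\{x_i,v_i\})$ and $b_2(\{u_j,y\})$ referenced are barriers computed inside $p_1$ respectively $p_2$, which are fixed conditions, hence their ranges lie in $[dom(\unlhd_1)]^{<\omega}$ respectively $[dom(\unlhd_2)]^{<\omega}$. Thus $B_3(\{x,y\})$ is a finite union of finite sets, so it lies in $[dom(\unlhd_1)\cup dom(\unlhd_2)]^{\omega}$ (in fact in the finite subsets), and since the right-hand side of $(iii)$ only invokes $b_1$ and $b_2$ directly — not $B_3$ on smaller instances — there is actually no recursion to speak of: $B_3$ is defined outright by cases $(i)$, $(ii)$, $(iii)$, with the canonical sequences $\langle\{x_i,v_i\}\rangle_{i<m}$, $\langle\{u_j,y_j\}\rangle_{j<n}$ supplied by the straightforward induction mentioned in the preceding discussion. (The phrase ``recursion on the ordinals'' in the statement is there only to parallel Lemma \ref{amalgcond}; I would note that $B_3$ is even more directly given.)

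Next, the barrier verification. One direction — every element of $B_3(\{x,y\})$ is $\unlhd_3$-below both $x$ and $y$ — I would split exactly as in Lemma \ref{amalgcond}: it is trivial when $x,y$ lie in a common $dom(\unlhd_i)$ or when $y=\Psi_{1,2}(x)$; in the mixed case, each $z\in b_1(\{x_i,v_i\})$ satisfies $z\unlhd_1 x_i\unlhd_1 x$ and $z\unlhd_1 v_i\unlhd_2 y$, hence $z\unlhd_3 x$ and (via $v_i\in dom(\unlhd_1)\cap dom(\unlhd_2)$, clause $(iv)$ of Lemma \ref{amalgorder}) $z\unlhd_3 y$; symmetrically each $z\in b_2(\{u_j,y\})$ satisfies $z\unlhd_2 u_j=\Psi_{1,2}(u_j)\unlhd_2 \Psi_{1,2}(x)$, giving $z\unlhd_3 x$, and $z\unlhd_2 y$ directly. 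For the other direction — every $t\in dom(\unlhd_1)\cup dom(\unlhd_2)$ that is $\unlhd_3$-below both $x$ and $y$ is $\unlhd_3$-below some element of $B_3(\{x,y\})$ — I would invoke Lemma \ref{amalgcond}: such a $t$ is $\unlhd_3$-below some $s\in b_3(\{x,y\})$, and $s$ lies in some $b_1(\{x_i,v_i\})$ or some $b_2(\{u_j,y_j\})$; in the former case $s\in B_3(\{x,y\})$ and we are done, in the latter case $s\unlhd_2 u_j$ and $s\unlhd_2 y$ (as already observed in the preceding discussion), so $s$ is $\unlhd_2$-below, hence $\unlhd_3$-below, some element of $b_2(\{u_j,y\})\subseteq B_3(\{x,y\})$, and by transitivity of $\unlhd_3$ so is $t$. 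The mild subtlety — the main point requiring care — is handling the degenerate configurations: $y=\Psi_{1,2}(x)$ is covered by clause $(ii)$ just as in the lemma, and one must also confirm that the ``canonical sequences'' are nonempty / behave correctly when some $x_i$ or $u_j$ itself lies in $dom(\unlhd_1)\cap dom(\unlhd_2)$ or equals $x$, in which case $b_1(\{x,x\})=\{x\}$ under the adopted convention keeps everything coherent. With these cases checked, $p_3=(\unlhd_3,B_3,\emptyset,\emptyset)$ satisfies clauses $(1)$–$(5)$ and clearly extends $p_1$ and $p_2$, completing the proof.
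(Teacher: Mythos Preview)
Your proposal is correct and follows essentially the same route as the paper: the discussion preceding the corollary already unfolds $b_3$ into the canonical decomposition $\bigcup b_1(\{x_i,v_i\})\cup\bigcup b_2(\{u_j,y_j\})$, replaces each $b_2(\{u_j,y_j\})$ by $b_2(\{u_j,y\})$, and observes (exactly as you do, via Lemma \ref{amalgcond}) that this remains a $\unlhd_3$-barrier. One small slip: when you show $z\in b_1(\{x_i,v_i\})$ satisfies $z\unlhd_3 y$, the relevant fact is transitivity of $\unlhd_3$ (or clause $(iii)$ of Lemma \ref{amalgorder}), not clause $(iv)$; similarly, for $z\in b_2(\{u_j,y\})$ the relation $z\unlhd_3 x$ comes directly from $z\unlhd_2 u_j\unlhd_1 x$ with $u_j\in dom(\unlhd_1)\cap dom(\unlhd_2)$, which is precisely clause $(iv)$ --- your detour through $\Psi_{1,2}(x)$ is unnecessary.
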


\begin{lemma}\label{c.c.}

$\mtcl P_\kappa$ has the $\kappa^{++}$--chain condition.
\end{lemma}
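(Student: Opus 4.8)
The plan is to run a standard $\Delta$-system argument, but with the symmetric systems and the progressive-isomorphism machinery doing the work that ordinary isomorphism-of-finite-conditions does in the $\kappa=\omega$ case. So suppose toward a contradiction that $\{q_\xi : \xi < \kappa^{++}\}$ is an antichain in $\mtcl P_\kappa$. Each $\unlhd_{q_\xi}$ is small, so its underlying domain $D_\xi \subseteq \kappa\times\kappa^{++}$ has size $<\kappa$; likewise $b_{q_\xi}$ is a function on $[D_\xi]^2$ of size $<\kappa$, each $\Delta_{q_\xi}$ consists of $<\kappa$ structures each of size $\kappa$, and $\Omega_{q_\xi} \subseteq \Delta_{q_\xi}$. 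The first move is to reflect each $q_\xi$ into a model: pick a sufficiently large regular $\lambda$, and for each $\xi$ a structure $M_\xi \prec (H(\lambda),\in,\Phi)$ of size $\kappa$, closed under small sequences, with $q_\xi \in M_\xi$ and $M_\xi \cap \kappa^{++} \in \kappa^{++}$. Using GCH (so $\kappa^{+} = 2^{<\kappa^+}$ is below $\kappa^{++}$, and $|H(\kappa^{++})| = \kappa^{++}$) one gets $\kappa^{++}$-many such models with pairwise the same "type over a fixed core"; more precisely, since there are only $\kappa^{+}$ many isomorphism types of the relevant finite-size data once we've passed to a $\Delta$-system, we may thin out.

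The second move is the $\Delta$-system / coherence thinning. After intersecting with a club and reindexing, arrange that $\{M_\xi \cap \kappa^{++} : \xi<\kappa^{++}\}$ is strictly increasing and continuous on a club, and that the sequence $\langle M_\xi : \xi < \kappa^{++}\rangle$ is "coherent" in the usual side-condition sense: for $\xi < \eta$, $M_\xi \in M_\eta$ (or at least $M_\xi \cap M_\eta \in M_\xi$ and the two agree below that point). Simultaneously thin so that the $D_\xi$ form a $\Delta$-system with root $R$, the restrictions $\unlhd_{q_\xi}\restr R$ and $b_{q_\xi}\restr [R]^2$ are all equal, and the full structures $\langle D_\xi, \unlhd_{q_\xi}, b_{q_\xi}\rangle$ are pairwise isomorphic via a map that is the identity on $R$; also thin so that the $\Delta_{q_\xi}$ and $\Omega_{q_\xi}$ are "isomorphic over their common parts" in the sense needed to apply Lemma \ref{iso3}. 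The key extra requirement, and this is where progressiveness enters, is to choose the indexing so that for $\xi<\eta$ the comparison isomorphism $\Psi_{\xi,\eta}$ between $\langle D_\xi,\unlhd_{q_\xi},b_{q_\xi}\rangle$ and $\langle D_\eta,\unlhd_{q_\eta},b_{q_\eta}\rangle$ is a \emph{progressive} isomorphism in the sense of Definition \ref{progressive}: the first coordinate is preserved, the second coordinate is moved upward, and fixed whenever it already occurs in the range on the $\eta$-side. This is arranged by ordering the "new" points of $D_\eta$ above $\sup(M_\xi \cap \kappa^{++})$ when $\xi<\eta$, which is possible precisely because the second coordinates of $D_\xi$ all lie below $M_\xi\cap\kappa^{++}$.

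The third move is the amalgamation. Fix $\xi<\eta$ from the thinned set. Apply Corollary \ref{amalgcond2} to $p_1 = (\unlhd_{q_\xi}, b_{q_\xi},\emptyset,\emptyset)$, $p_2 = (\unlhd_{q_\eta},b_{q_\eta},\emptyset,\emptyset)$ and $\Psi_{\xi,\eta}$ to get a common extension $(\unlhd_3, B_3,\emptyset,\emptyset)$ of the order-and-barrier parts; then use Lemma \ref{iso2} and Lemma \ref{iso3} to amalgamate $\Delta_{q_\xi}$ and $\Delta_{q_\eta}$ into a single $(\Phi,\kappa)$-symmetric system $\Delta_3$ (this is where the coherence of the $M_\xi$'s and the "identity on intersections" clause (B) of Definition \ref{symm} are used), and set $\Omega_3 = \Omega_{q_\xi}\cup\Omega_{q_\eta}$ (or its closure inside $\Delta_3$ if needed to satisfy clause (5)). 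One then has to check that $q_3 := (\unlhd_3, B_3, \Delta_3, \Omega_3)$ is genuinely a condition: clauses (1)--(4) are handed to us by the cited lemmas and Corollary, so the only real verification is clause (5) — that for $N \in \Omega_3$ and $\{u,v\}\subseteq N\cap dom(\unlhd_3)$ we have $B_3(\{u,v\})\in N$. This is the step I expect to be the main obstacle: one has to trace through the recursive definition of $B_3$ (the union-over-barriers clause (iii) of Corollary \ref{amalgcond2}) and argue that every ingredient used to build $B_3(\{u,v\})$ already lies in $N$, which uses that $N$ sees $\Psi_{\xi,\eta}\restr N$ (by elementarity plus the symmetric-system axioms (C),(D)), that the relevant "intermediate points" $v_i, u_j$ are in $dom(\unlhd_\xi)\cap dom(\unlhd_\eta)\subseteq R\subseteq N$, and clause (5) for $q_\xi$ and $q_\eta$ separately. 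Once $q_3 \leq_\kappa q_\xi$, $q_\eta$ is established, $q_\xi$ and $q_\eta$ are compatible, contradicting that they lay in an antichain; hence no antichain of size $\kappa^{++}$ exists and $\mtcl P_\kappa$ has the $\kappa^{++}$-chain condition. I would also remark that the counting inputs (GCH giving $2^\kappa = \kappa^+$, so that there are only $\kappa^+$ isomorphism types of small conditions) are exactly what makes the pigeonhole in the second move go through, and that $<\kappa$-closure lets us pass freely between a condition and its small pieces without worrying about new small sets appearing.
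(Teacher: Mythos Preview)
Your overall architecture is the paper's: thin to a $\Delta$-system, get pairwise isomorphic structures fixing the root, use Lemma~\ref{iso3} to amalgamate the symmetric systems, check progressiveness, apply the amalgamation lemma to the order-and-barrier parts, and verify clause~(5). Two points are worth flagging. First, the clause~(5) verification is simpler than you expect, and your sketched handling of it contains a misstep. In the c.c.\ setting (unlike properness) the genuinely mixed case $x\in D_\xi\setminus D_\eta$, $y\in D_\eta\setminus D_\xi$ with $x,y\in N$ \emph{cannot occur}: once you have arranged (as the paper does) that each $\Delta_\xi$ contains a top model $\overline{N}_\xi$ with $\unlhd_\xi, b_\xi, \Delta_\xi\setminus\{\overline{N}_\xi\}\subseteq\overline{N}_\xi$, any $N\in\Omega_\xi$ satisfies $N\subseteq\overline{N}_\xi$, so $y\in N\cap D_\eta\subseteq\overline{N}_\xi\cap\overline{N}_\eta=R\subseteq D_\xi$. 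Thus one never needs to trace the recursive definition of $B_3$; the paper accordingly uses the plain $b_3$ from Lemma~\ref{amalgcond} rather than $B_3$ from Corollary~\ref{amalgcond2}. Your proposed argument for the mixed case invokes $R\subseteq N$, which is false for a general $N\in\Omega_\xi$; fortunately you never need it.

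Second, some of your apparatus is surplus to requirements here: the club/coherent chain of models $M_\xi\in M_\eta$ and Lemma~\ref{iso2} are not used in the paper's c.c.\ proof (Lemma~\ref{iso2} is reserved for properness). The paper instead simply takes the $\Delta$-system on $\{\bigcup\Delta_\xi\}$, thins by isomorphism type of the structures $W_\xi=\langle\overline{N}_\xi,\in,\Phi,R,(N_{\xi,i})_i,(N)_{N\in\Omega_\xi},\unlhd_\xi,b_\xi\rangle$, and then picks $\xi_1<\xi_2$ with an explicit ordinal-separation property (via $\sigma_0=\sup(R\cap\kappa^{++})$ and $\sigma_1=\sup(\kappa^{++}\cap\bigcup\Delta_{\xi_1})$) to guarantee that $\Psi_{1,2}$ is progressive. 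Your version would also go through, but the paper's is leaner.
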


\begin{proof}
Suppose that $q_\x=(\unlhd_\x, b_\x, \Delta_\x, \Omega_\x)$ is a $\mtcl P_\kappa$--condition for each $\x<\kappa^{++}$. We should find two $\leq_\kappa$--compatible conditions. For so doing we will use standard $\D$--system and pigeonhole principle arguments. We may assume that for each $\x \in \kappa^{++}$, there exists $\overline{N}_\x\in \Delta_\x$ such that $\{\unlhd_\x, b_\x, \Delta_\x \setminus\{\overline{N}_\x\} \} \subseteq \overline{N}_\x$. A second reduction consists in assuming that there are cardinals $\lambda$ and $\mu$ below $\kappa$ such that for all $\x<\kappa^{++}$, $|\unlhd_{\x}|= \lambda$ and $|\D_\x|=\mu$.
Suppose $\Delta_\x=\{N_{\x,i}\,:\,i< \mu\}$.  By $\textsc{GCH}$ we may assume that $\{\bigcup \Delta_\x \,:\,\x<\kappa^{++}\}$ forms a $\Delta$--system with root  $R$ (note that for each $\x$, $\bigcup \Delta_\x =\overline{N}_\x$). Furthermore, by $\textsc{GCH}$ we may assume, for all $\x$, $\x'<\kappa^{++}$, that the structures
$$W_\x:=\la \overline{N}_\x, \in, \Phi, R, (N_{\x,i})_{i \in \mu}, (N)_{N\in \Omega_\x}, \unlhd_\x, b_\x\ra$$
are pairwise isomorphic and that the corresponding isomorphism fixes $R$. The first assertion follows from the fact that there are only $\kappa^+$--many isomorphism types for such structures. For the second assertion note that, if $\Psi$ is  the unique isomorphism between $W_\x$ and $W_{\x'}$, then the restriction of $\Psi$ to $R\cap\k^{++}$ has to be the identity on $R\cap\k^{++}$. Since  there is  a bijection $\Phi: \k^{++} \into H(\k^{++})$ definable in $(H(\k^{++}), \in, \Phi)$, we have that $\Psi$ fixes $R$ if and only if it fixes $R\cap\k^{++}$. It follows that $\Psi$ fixes $R$.
Hence, by Lemma \ref{iso3} we have, for all $\x$, $\x'$ in $\kappa^{++}$, that $\D_\x\cup \D_{\x'}$ is a $(\Phi,\kappa)$--symmetric system.\footnote{A similar argument appears in the proof of Lemma 3.9 in \cite{ASP}.}

Let $\sigma_0$ be equal to the supremum of $\Phi^{-1}[R]$ (since $\Phi$ is a bijection between $\kappa^{++}$ and $H(\kappa^{++})$, it is clear that $\Phi^{-1}[R]$= $R \cap \kappa^{++}$). By $\textsc{GCH}$, we may assume that for all $\x$ and $\x'$ in $\kappa^{++}$, $\unlhd_\x$ and $b_\x$ restricted to $\Phi[\sigma_0] \times \Phi[\sigma_0] $ are respectively equal to $\unlhd_{\x'}$ and $b_{\x'}$  restricted to $\Phi[\sigma_0] \times \Phi[\sigma_0] $. Find $\x_1 <\x_2$ in $\kappa^{++}$ such that $\sigma_0 \cap \bigcup \Delta_{\x_1} = \sigma_0 \cap R$ and such that, letting $\sigma_1$ be equal to the supremum of $\kappa^{++} \cap \bigcup \D_{\x_1}$, $\sigma_1 \cap \bigcup \Delta_{\x_2} = \sigma_0 \cap R$. As a convenient abuse of notation, $q_i=(\unlhd_i, b_i, \Delta_i, \Omega_i)$ ($i \in\{1,2\}$) will denote $q_{\x_i}=(\unlhd_{\x_i}, b_{\x_i}, \Delta_{\x_i}, \Omega_{\x_i})$. Accordingly, $\Psi_{1,2}$ will denote the isomorphism between the structures $W_{\x_1}$ and $W_{\x_2}$.

We now claim that $q_{1}$ and $q_{_2}$ are compatible. We have already check that $(\emptyset, \emptyset, \Delta_1 \cup \Delta_2, \emptyset)$ is a condition in $\mtcl P_\kappa$. Also note that, by the choice of $\sigma_0$ and $\sigma_1$ together with the fact that $\kappa$ is included in $\overline{N}_1 \cap \overline{N}_2$,  $\Psi_{1,2}$ is a progressive isomorphism between $p_1=(\unlhd_1, b_1, \emptyset, \emptyset)$ and $p_2=(\unlhd_2, b_2, \emptyset, \emptyset)$. So, if $p_3:=(\unlhd_3, b_3, \emptyset, \emptyset)$ is defined as $\Psi_{1,2}$--amalgamation of $p_1$ and $p_2$ (as in Lemma \ref{amalgcond}), then $p_3 \in \mtcl P_\kappa$ and therefore, $(\unlhd_3, b_3, \Delta_1 \cup \Delta_2, \emptyset) \in \mtcl P_\kappa$. This proof will be completed once we  show that $q_3=(\unlhd_3, b_3, \Delta_1 \cup \Delta_2, \Omega_1 \cup \Omega_2)$ satisfies condition $(5)$ of the definition of the elements of $\mtcl P_\kappa$.

So, fix $N \in \Omega_1 \cup \Omega_2$ and $u, v  \in (dom(\unlhd_1) \cup dom(\unlhd_2))\cap N$. We must show that $b_3(\{u,v\})$ is an element of $N$. We omit the obvious case when there exists $j \in \{1,2\}$ such that $N \in \Delta_{j}$ and $u,v \in dom(\unlhd_j)$. So, assume first that $i$ and $j$ are two different elements of the set $\{1, 2\}$, $ N \in \Delta_i$ and $u,v \in dom(\unlhd_{j})$. So, $u$ and $v$ are members of the root $R$ and $\{u,v\} \subseteq   \Psi_{i,j}(N) \in \Omega_j$ (recall that $\Psi_{1,2}$ fixes $R$). Moreover, by the choice of $\sigma_0$ and $\sigma_1$, it is also true that $b_1(\{u,v\})=b_2(\{u,v\})$. Also note that $q_j$ satisfies condition $(5)$ in the definition of our forcing and hence, $b_j(\{u,v\}) \in \Psi_{i,j}(N)$. Using this relation and the fact that the isomorphism $\Psi_{j,i}$ fixes $R$, we deduce that $b_3(\{u,v\})=b_i(\{u,v\}) \in N$. The last case to consider is when $N \in \Omega_i$, $u \in dom(\unlhd_{i}) $ and $v  \in dom(\unlhd_{j})$. But this case is straightforward, since $v$ is in the root $R$ and therefore, $v$ is also a member of $dom(\unlhd_{i})$.
\end{proof}

The following lemma will be used to argue that $\mtcl P_\k$ is proper with respect to structures of cardinality $\kappa$ and it was taken from a primitive version of \cite{ASP}.

\begin{lemma}\label{fff}
Let $\mtcl N =\{N_i\,:\,i<\mu\}$ be a $(T,\kappa)$--symmetric system. For every $N\in\mtcl N$ and every $i\in \k^{++}\bs N$ there are ordinals $\a_i<\b_i$ such that

\begin{itemize}
\it[(a)] $\a_i\in N$,

\it[(b)] $\b_i= min(N\bs i)$ if this minimum exists, and $\b_i= \k^{++}$ otherwise,

\it[(c)] $\a_i<i<\b_i$, and

\it[(d)] $[\a_i,\,\b_i)\cap N'\cap N=\emptyset$ whenever $N'\in\mtcl N \bs N$ is such that $\d_{N'}<\d_N$.

\end{itemize}

\end{lemma}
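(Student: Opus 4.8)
The plan is to produce $\b_i$ first, essentially by fiat from clause (b), and then to produce $\a_i$ by a downward search inside $N$, using the elementarity of $N$ together with clause (C) of the definition of a symmetric system to control the finitely-many relevant ``smaller'' structures $N'$. Fix $N\in\mtcl N$ and $i\in\k^{++}\bs N$. Set $\b_i=\min(N\bs i)$ if this minimum exists and $\b_i=\k^{++}$ otherwise; note $\b_i>i$ since $i\notin N$, and if $\b_i<\k^{++}$ then $\b_i\in N$. So (b) holds and the upper half of (c) is immediate. It remains to choose $\a_i\in N$ with $\a_i<i$ and $[\a_i,\b_i)\cap N'\cap N=\emptyset$ for every $N'\in\mtcl N\bs N$ with $\d_{N'}<\d_N$.

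First I would observe that only finitely many such $N'$ matter — or more precisely, that the set $\{\d_{N'}\,:\,N'\in\mtcl N\bs N,\ \d_{N'}<\d_N\}$ is finite, being a subset of the small set $\mtcl N$, so list those distinct values as $\d^{0}<\dots<\d^{k-1}$, all $<\d_N$. For each such $\d^\ell$, clause (C) applied with $i$ any index of $N$ gives some $N''\in\mtcl N$ with $\d_{N''}=\d_N$ and every $N'$ of level $\d^\ell$ lying in $N''$; but taking $N''$ so that $\d_{N''}=\d_N$ forces, by clause (B) (the isomorphism $\Psi_{N'',N}$ fixing $N''\cap N$), that $N''\cap\k^{++}$ agrees with $N\cap\k^{++}$ below $\d_N$, hence the relevant initial segments coincide. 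The upshot I want is: each $N'$ as in (d) belongs to some $N^*\in\mtcl N$ with $\d_{N^*}=\d_N$ and $N^*\cap\k^{++}=N\cap\k^{++}$, and there are only finitely many such $N^*$ needed, say $N^*_0,\dots,N^*_{k-1}$; moreover each $N^*_\ell\in N$ is false in general, so instead I use that $N'\in N^*_\ell$ and $N^*_\ell$ has the same ordinals as $N$ to transfer the search into $N$.

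The key step is then the following downward search, carried out inside $N$. For $\xi<\b_i$ let $S(\xi)$ abbreviate the statement ``for every $N'\in\mtcl N\bs N$ with $\d_{N'}<\d_N$, $[\xi,\b_i)\cap N'\cap N=\emptyset$''. I want an $\a_i\in N\cap i$ with $S(\a_i)$. Since each relevant $N'$ sits inside one of the finitely many $N^*_\ell$, and $N^*_\ell$ has size $\k$ and the same ordinals below $\d_N$ as $N$, the set $D=\bigcup_{\ell<k}(N^*_\ell\cap\k^{++})$ is a small subset of $N\cap\k^{++}$ — here I use that $D\subseteq N$ because the $N^*_\ell$ share their ordinals with $N$, and $|D|\le\k\cdot|\mtcl N|<\k^{++}$, indeed $|D|\le\k$ so $D$ is small. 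Now $D\cap i$ is a small set of ordinals all $<i$, so it has a supremum $<i$ (as $\operatorname{cf}(i)$ could be $\le\k$ this needs care: instead take $\a_i=\sup(D\cap i)$ if this is $<i$, and otherwise note $i$ would be a limit of points of $D\cap N$, contradicting $i\notin N$ once we argue $D\cap N$ cofinal in $i$ forces $i\in N$ by closure of $N$ under small sequences). Then $[\a_i,\b_i)\cap D=\emptyset$ — because any element of $D$ that is $\ge\a_i$ and $<\b_i$ would have to be $\ge i$, but elements of $N\cap\b_i$ that are $\ge i$ do not exist by the definition of $\b_i$, while elements of $D\setminus N$ don't interfere since $D\subseteq N$. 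Hence $S(\a_i)$ holds, and $\a_i\in N$, $\a_i<i<\b_i$, giving (a),(c),(d).

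The main obstacle, and the place I would be most careful in the final write-up, is the cofinality issue in choosing $\a_i$: when $\operatorname{cf}(i)\le\k$ the supremum of a small cofinal subset of $i$ inside $N$ lands back in $N$ (by closure under small sequences, clause (A) of Definition \ref{symm}), which would force $i\in N$, contradiction — so one must argue that $D\cap i$ is \emph{not} cofinal in $i$, i.e. that $i$ is not approached from below by the ordinals of the finitely many $N^*_\ell$. This is exactly where the assumption $i\notin N$ (equivalently $i\notin N^*_\ell$, since these share ordinals with $N$) does its work, together with the continuity remark recorded after Definition \ref{symm}. The second delicate point is verifying that clause (C), applied finitely often, really does trap every relevant $N'$ inside a structure of level $\d_N$ having the same ordinals as $N$; this is a routine unwinding of clauses (B) and (C), but it should be spelled out so that the finiteness of $\{N^*_\ell\}$ is genuinely established rather than assumed.
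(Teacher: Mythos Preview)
Your argument contains a central error: the claim that a structure $N^*\in\mtcl N$ with $\d_{N^*}=\d_N$ satisfies $N^*\cap\k^{++}=N\cap\k^{++}$ is simply false. Clause (B) of Definition~\ref{symm} gives an isomorphism $\Psi_{N^*,N}$ that is the identity on $N^*\cap N$, but this says nothing about ordinals outside the intersection; in general two such structures contain \emph{different} ordinals in $[\k^+,\k^{++})$. (Having several isomorphic but distinct models at the same height $\d$ is the whole point of a symmetric system.) Your earlier, weaker observation that they agree ``below $\d_N$'' is correct but useless, since $\d_N<\k^+$ while $i$ ranges over all of $\k^{++}$. Once this claim fails, so does $D\subseteq N$, and the supremum argument collapses. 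There are two secondary slips as well: $\mtcl N$ is \emph{small} (cardinality ${<}\k$), not finite, so for $\k>\o$ your ``finitely many $N^*_\ell$'' is unwarranted; and even granting your setup, $|D|\le\k$ does not make $D$ small in the required sense, since each $N^*_\ell$ already has size $\k$.

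The paper's proof avoids all of this by treating one $N'$ at a time and then taking the supremum of the ${<}\k$ many resulting $\a_i$'s, which lands in $N$ by closure under small sequences. For a fixed $N'$ it passes to $\ov N\in\mtcl N$ with $N'\in\ov N$ and $\d_{\ov N}=\d_N$, and works with the \emph{intersection} $N\cap\ov N$ rather than asserting that $\ov N$ and $N$ share ordinals. The key points are: since $N'\in\ov N$ and $|N'|=\k\subseteq\ov N$, one has $N'\subseteq\ov N$, so any ordinal in $N'\cap N$ already lies in $\ov N\cap N$; and the least $\eta\in\ov N\cap\k^{++}$ above $\sup(N\cap\ov N\cap i)$ must have cofinality $\k^+$ (by continuity of $\Psi_{\ov N,N}$), so $\sup(N'\cap\eta)<\eta$ lies in $\ov N$ and can be overtaken by some $\a_i\in N\cap\ov N\cap i$. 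This interplay between $\ov N$ and $N\cap\ov N$ is exactly the idea your sketch is missing.
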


\begin{proof}
Since the set of $N'\in\mtcl N \bs N$ such that $\d_{N'}<\d_N$ is small and $N$ is closed under small sequences, it suffices to show, for a fixed such $N'$, that there is an ordinal $\a_i$ in  $i \cap N$ satisfying $[\a_i,\,\b_i)\cap N'\cap N=\emptyset$.

Let $\ov N \in\mtcl N$ be such that $N'\in \ov N$ and $\d_{\ov N}=\d_N$. Suppose first that $\eta=min((\ov N\cap \k^{++})\bs sup(N\cap \ov N\cap i))$ exists. In that case, $cf(\eta)=\kappa^+$ (if $cf(\eta) \leq \kappa$, then $\eta$ is a limit point of ordinals in $N\cap \ov N$, and therefore $\Psi_{\ov N, N}(\eta)=\eta\in N$ since $\Psi_{\ov N, N}$ is continuous and fixes all ordinals in $\ov N\cap N$. So, $\eta$ and its successor are members of $N \cap \ov N \cap i$ which is of course a contradiction). But then $\z=sup(N'\cap \eta)<\eta$. Hence, since $\z\in \ov N$, there is some $\a_i\in N\cap \ov N\cap i$ above $\z$. Then we have that $\a_i<i<\b_i$ by the choice of these ordinals (and the fact that $i\notin N$). Also, if $i^\ast\in [\a_i,\,\b_i)\cap N'\cap N$, then $i^\ast\in \ov N\cap N$ and $\eta\leq i^\ast$. Hence $i<i^\ast$, and therefore $\b_i\leq i^\ast$, which is a contradiction.

The other possibility is that $N\cap \ov N\cap i$ is cofinal in $\ov N\cap \k^{++}$. Again, since $sup(N'\cap i) =sup(N'\cap \k^{++})\in\ov N$, we may fix $\a_i\in N\cap \ov N\cap i$ above $sup(N'\cap i)$. The rest of the proof is now as in the previous case.
\end{proof}

The following lemma ends the proof of Theorem \ref{mainthm}.

\begin{lemma}\label{proper}
$\mtcl P_\kappa$ is proper with respect to structures of cardinality $\kappa$.
\end{lemma}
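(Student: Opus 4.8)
The plan is to prove properness with respect to structures of cardinality $\kappa$ by the standard side-conditions strategy, using the symmetric system as a memory of the structures encountered and using the amalgamation machinery of Lemma \ref{amalgcond} / Corollary \ref{amalgcond2} to build the generic condition's extensions. Fix a sufficiently large regular $\lambda$, an elementary $M \prec H(\lambda)$ of size $\kappa$ closed under small sequences with $\mtcl P_\kappa \in M$, and $p \in \mtcl P_\kappa \cap M$. Let $N_M = M \cap H(\kappa^{++})$; the first step is to check that $\{N_M\}$ is a $(\Phi,\kappa)$-symmetric system (clauses (B)--(D) are vacuous, clause (A) follows from elementarity and closure under small sequences), so that $q_0 := (\unlhd_p, b_p, \Delta_p \cup \{N_M\}, \Omega_p \cup \{N_M\})$ is a condition below $p$ — here one must verify clause (5), which holds because $b_p \in M$ so $b_p$ maps pairs from $N_M$ into $N_M$ by elementarity. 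The claim is then that $q_0$ is $(M,\mtcl P_\kappa)$-generic.

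To see this, let $D \in M$ be a dense open subset of $\mtcl P_\kappa$ and let $r \leq_\kappa q_0$ be arbitrary; I must find $s \in D \cap M$ compatible with $r$. First extend $r$ into $D$, and then, by a further extension, arrange that $r$ ``closes off'': concretely, one finds an extension $r' \leq_\kappa r$ in $D$ together with a structure $\ov N \in \Delta_{r'}$ with $N_M \in \ov N$, $\delta_{\ov N} = \delta_{N_M}$, and $\{\unlhd_{r'}, b_{r'}, \Delta_{r'}\} \subseteq \ov N$ — this uses that $D$ is dense and that we may always throw in one more ``big'' structure above everything, just as in the chain-condition proof. Now consider $r'' := \Psi_{\ov N, N_M}(r' \restr \ov N)$, the pointwise image under the collapsing isomorphism of the part of $r'$ that lies inside $\ov N$. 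Since all the parameters of $r' \restr \ov N$ lie in $\ov N$ and $\Psi_{\ov N, N_M}$ is an isomorphism fixing $\ov N \cap N_M$, $r''$ is a genuine condition; moreover $\Psi_{\ov N, N_M}$ is definable from parameters in $M$ and $r' \restr \ov N$ is coded in $N_M \subseteq M$ (using closure of $M$ under small sequences and that $|\unlhd_{r'}|, |\Delta_{r'}| < \kappa$), hence $r'' \in M$, and by elementarity and the fact that $D$ is open we get $r'' \in D \cap M$. It remains to amalgamate $r'$ with $r''$ over $M$.

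The amalgamation is where the progressive-isomorphism technology does the work. The isomorphism $\Psi_{\ov N, N_M}$ restricted to the order parts gives an isomorphism $\Psi_{1,2}$ between $\langle dom(\unlhd_{r''}), \unlhd_{r''}, b_{r''}\rangle$ and $\langle dom(\unlhd_{r'\restr \ov N}), \unlhd_{r'\restr \ov N}, b_{r'\restr \ov N}\rangle$; the key point, exactly as in Lemma \ref{c.c.}, is that the second coordinates move upward (clause $(\oplus)$), that it is the identity below $\sup(N_M \cap \kappa^{++})$ (clause $(\ominus)$, using that $N_M$ is an elementary substructure so $N_M \cap \kappa^{++}$ is an initial segment from the point of view of the relevant ordinals), and that $b$-values agree on common pairs (clause $(\otimes)$, from $b_{r'} \in \ov N$ and $\Psi_{\ov N, N_M}$ fixing $\ov N \cap N_M$); so $\Psi_{1,2}$ is progressive. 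By Corollary \ref{amalgcond2} one forms the Amalgamated order part $p_3 = (\unlhd_3, B_3, \emptyset,\emptyset)$, and then glues back the full order $\unlhd_{r'}$ (outside $\ov N$) and the union of the symmetric systems $\Delta_{r'} \cup \Delta_{r''}$ — which is a symmetric system by Lemma \ref{iso3}, with $X = \bigcup(\Delta_{r'}) \cap \bigcup(\Delta_{r''})$ and the evident isomorphism induced by $\Psi_{\ov N, N_M}$ — to obtain a common extension $s^* \leq_\kappa r', r''$; since $r'' \geq_\kappa$ some $s \in D \cap M$ (in fact $r'' \in D\cap M$), $s^*$ witnesses compatibility. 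I expect the main obstacle to be verifying clause (5) of the definition of $\mtcl P_\kappa$ for the amalgamated condition — one must check, for each $N$ in the combined $\Omega$ and each pair $u,v$ of the order elements lying in $N$, that the $B_3$-value stays in $N$; this is handled exactly as in the last paragraph of the proof of Lemma \ref{c.c.}, splitting into cases according to whether $u,v$ lie in the $r'$-part or the $r''$-part, and using that the ``crossing'' pairs live in the common part where the two barrier functions agree and where $\Psi_{\ov N, N_M}$ acts as the identity.
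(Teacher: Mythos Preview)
Your approach has genuine gaps. First, the setup is impossible as written: you cannot have $N_M \in \ov N$ together with $\delta_{\ov N} = \delta_{N_M}$, since $\ov N$ would then compute $\delta_{N_M} = N_M \cap \kappa^+$ as an element, giving $\ov N \cap \kappa^+ \in \ov N$; similarly $\Delta_{r'} \in \ov N$ with $\ov N \in \Delta_{r'}$ forces $\ov N \in \ov N$. More importantly, even after repairing this, the reflection step fails: the isomorphisms $\Psi_{\ov N, N_M}$ supplied by the symmetric system are isomorphisms of $(H(\kappa^{++}), \in, \Phi)$-structures, and the dense set $D$ is not part of that language, so there is no reason $r'' := \Psi_{\ov N, N_M}(r' \restr \ov N)$ lies in $D$. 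The paper does the reflection differently: it uses elementarity of $N^* \prec H(\lambda)$ (where $D$ \emph{is} a parameter) to find $q_1 \in D \cap N^*$ extending $q_2 \restr N$ and satisfying a list of shape requirements witnessed externally by $q_2$ itself. The symmetric-system amalgamation is then Lemma \ref{iso2}(ii), not Lemma \ref{iso3}, since $\Delta_1 \subseteq N$.

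Second, and this is the substantive omission, you do not use Lemma \ref{fff} at all. In the paper the reflected condition $q_1$ is not an arbitrary isomorphic copy of $q_2$: its heights outside $N$ are deliberately placed in the intervals $(\alpha_i, \beta_i)$ of Lemma \ref{fff}, guaranteeing that no new height of $q_1$ meets $M' \cap N$ for any $M' \in \Delta_2$ with $\delta_{M'} < \delta_N$. This is precisely what makes clause (5) verifiable for such small $M'$: one concludes that every $x \in M' \cap dom(\unlhd_3)$ already lies in $dom(\unlhd_2)$, whence $B_3(\{x,y\}) = b_2(\{x,y\}) \in M'$. Your appeal to ``exactly as in the last paragraph of the proof of Lemma \ref{c.c.}'' does not transfer: in the chain-condition argument the two conditions are separated by a $\Delta$-system root and there are no small structures $M'$ straddling them, whereas here such $M'$ are present in $\Omega_2$ and must be handled via the height-placement of Lemma \ref{fff}. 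The case $\delta_{M'} \geq \delta_N$ also requires its own argument (via the canonical surjections $g_\nu$), which your sketch does not supply.
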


\begin{proof}
Let $\lambda$ be a sufficiently large regular cardinal $\lambda$, let $N^* \prec H(\lambda)$ be an elementary substructure of size $\kappa$ containing $\mtcl P_\kappa$ and closed under small sequences and let $p \in \mtcl P \cap N^*$. We should find an extension $q \leq_{\mtcl P_\kappa} p$ such that $q$ is $(N^*, \mtcl P_\kappa)$-generic. Let $q$ be equal to $(\unlhd_p, b_p, \Delta_p \cup\{N\}, \Omega_p\cup\{N\})$, where $N=H(\kappa^{++}) \cap N^*$. Clearly $q$ is a condition in $\mtcl P_\kappa$ extending $p$.
So, fix an open dense set $D \in N^\ast$ and a condition $q_2=(\unlhd_2,b_2, \Delta_2, \Omega_2)$ extending $q$; we should prove that there exists $q_1$ in $D \cap N^\ast$ compatible with $q_2$. Without loss of generality we will assume that $q_2 \in D$. A second preparation relies on the notion of \emph{height}. For each $(\nu, i) \in \kappa \times \kappa^{++}$, the \emph{height of $(\nu, i)$} is defined as $ht(\nu,i):=i$. If we restrict this function to $dom(\unlhd_2)$, then the order type of the range of this restriction is of course an ordinal below $\kappa$. Let $\pi$ be equal to this order type and let $f_2: \pi \into range(height\restr_{dom(\unlhd_2)})$ be the corresponding enumerating function. So, by elementarity and since $N^\ast$ is closed under small sequences, there is in $N^\ast$ a condition $q_1$ in $D \cap N^\ast$ with the following properties:
\begin{itemize}
\it[(a)] $q_1=(\unlhd_1,b_1, \Delta_1, \Omega_1)$ extends the restriction to $N$ of $q_2$. This restriction is defined as $(\unlhd_2 \cap N, b_2 \cap N, \Delta_2 \cap N, \Omega_2 \cap N)$,\footnote{Note that $N \in \Delta_2 \cap \Omega_2$. So, by the first part of Lemma \ref{iso2} and clause (5) in the definition of the elements of $\mtcl P_\kappa$, this restriction is in fact a condition in $\mtcl P_\kappa$.}
\it[(b)] the order type of the range of $height \restr_{dom(\unlhd_1)}$ is equal to $\pi$,
\it[(c)] there is a function $f_1: \pi \into range(height\restr_{dom(\unlhd_1)})$ enumerating $range(height\restr_{dom(\unlhd_1)})$ in order type $\pi$ and for each $\tau \in \pi$, with $f_2(\tau) \notin N$, $\alpha_{f_2(\tau)}< f_1(\tau)< \beta_{f_2(\tau)}$, where $\alpha_{f_2(\tau)}$ and $\beta_{f_2(\tau)}$ are as in the above lemma for $i=f_2(\tau)$, $N$, and $\mtcl N= \Delta_2$, and
\it[(d)] there is a progressive isomorphism $\Psi_{1,2}: \langle dom(\unlhd_1), \unlhd_1, b_1\rangle \into \langle dom(\unlhd_2), \unlhd_2, b_2 \rangle$.\footnote{With respect to $(\otimes)$ in Definition \ref{progressive} note again that, since $N \in \Omega_2$, $b_2(\{u,v\}) \in N$ whenever $u,v \in N$.}

\end{itemize}

We now claim that $q_1$ and $q_2$ are compatible.  Let $p_3=(\unlhd_3, B_3, \emptyset, \emptyset)$ be the $\Psi_{1,2}$--Amalgamation of $p_1:=(\unlhd_1, b_1, \emptyset, \emptyset)$ and $p_2:=(\unlhd_2, b_2, \emptyset, \emptyset)$ (as in Corollary \ref{amalgcond2}) and define $\Delta_3$ as
$$\Delta_3:= \Delta_2 \cup\{\Psi_{N, N'}(W) \,:\, W \in \Delta_1,\, N' \in \Delta_2,\,\d_{N'}=\d_N\}.$$

So, $p_3 \in \mtcl P_\kappa$ and therefore, $(\unlhd_3, B_3, \Delta_3, \emptyset) \in \mtcl P_\kappa$ (the second assertion follows from $(a)$ above and the second part of Lemma \ref{iso2}). This proof will be completed once we  show that $q_3=(\unlhd_3, B_3, \Delta_3, \Omega_1 \cup \Omega_2)$ satisfies condition $(5)$ of the definition of the elements of $\mtcl P_\kappa$.

So, fix $M \in \Omega_1 \cup \Omega_2$ and $x, y  \in (dom(\unlhd_1) \cup dom(\unlhd_2))\cap M$. We must show that $B_3(\{x,y\})$ is an element of $M$. We omit the obvious case when $M \in \Omega_1$ (note that if $M$ is in $\Omega_1$, then $x$ and $y$ are in $dom(\unlhd_1)$). Assume now that $M \in \Omega_2 \setminus \Omega_1$. If $\delta_M < \delta_N$, then (by (c)) $x$ and $y$ are in $dom(\unlhd_2)$. Since $q_2$ satisfies condition $(5)$ of the definition of the elements of $\mtcl P_\kappa$, $B_3(\{x,y\})=b_2(\{x,y\}) \in M$. The final case to consider is when $\delta_M \geq \delta_N$. In order to avoid triviality, we may assume that $x \in dom(\unlhd_1)$. For each ordinal $\nu \in \kappa^{++}$, let $g_{\nu}$ be the $\Phi$--first surjection between $\kappa^{+}$ and $\nu$. Note that if $x \in dom(\unlhd_1)$, then (by elementarity) all the elements of $dom(\unlhd_1)$ having at most height $\mu:= height(x)$ are of the form $g_{\mu+1}(\varepsilon)$ for some $\varepsilon \in \delta_N$. Since $g_{\mu+1}$ is also definable in $M$ and $\delta_M \geq \delta_N$, the conclusion is that all the elements of $dom(\unlhd_1)$ having at most height $\mu= height(x)$ are in $M$. So,  we have proved the following result.

\begin{remark}
If $x$ and $y$ are in $dom(\unlhd_1)$ with $x \in M$ (independently of whether or not $y \in M$, then $B_3(\{x,y\})=b_1(\{x,y\}) \in M$. More generally, if $x \in M \cap dom(\unlhd_1)$ and $u \unlhd_1 x$, then $u \in M$.
\end{remark}

By the above remark, it remains to verify that if $y \in M \cap dom(\unlhd_2)  \setminus dom(\unlhd_1)$ and $x \in M \cap dom(\unlhd_1) \setminus dom(\unlhd_2)$, then $B_3(\{x,y\}) \in M$ (note that the case when $x \in M \cap dom(\unlhd_2)$ follows from the fact that $M \in \Omega_2$). If $y=\Psi_{1,2}(x)$, then  $B_3(\{x,y\})=B_3(\{x, \Psi_{1,2}(x)\})=\{x\} \in M$. If $y \neq \Psi_{1,2}(x)$, then there are two finite sequences $\langle\{x_i, v_i\} \,:\, i < m \rangle$ and $\langle u_j  \,:\, j < n \rangle$ with  $x_i, u_j \unlhd_1 x$ and $v_i \unlhd_2 y$ for all $i$ and $j$ and such that

$$B_3(\{x, y\})=\bigcup \{b_1(\{x_i, v_i\})\,:\, i <m \} \cup \bigcup \{b_2(\{u_j, y\})\,:\, j < n \}.$$

Using again the above remark we get that, for all $i$ and $j$, $b_1(\{x_i, v_i\}), u_j$ are in $M$. Finally, since $y$ and each $u_j$ are in $M \in \Omega_2$, then so is $b_2(\{u_j, y\})$. This finishes the proof of Theorem \ref{mainthm}.

\end{proof}

\end{document}